\begin{document}
\numberwithin{equation}{section}
\newtheorem{definition}{Definition}[section]
\newtheorem{theorem}{Theorem}[section]
\newtheorem{proposition}{Proposition}[section]
\newtheorem{corollary}{Corollary}[section]
\newtheorem{lemma}{Lemma}[section]
\newtheorem{remark}{Remark}[section]

\title{A Symmetry-based Decomposition Approach to Eigenvalue Problems:
Formulation, Discretization, and Implementation
\thanks{This work was partially supported by
the National Science Foundation of China under Grant 61033009,
the Funds for Creative Research Groups of China under Grant 11021101,
the National Basic Research Program of China under Grants
2011CB309702 and 2011CB309703,
the National High Technology Research and Development Program of China
under Grant 2010AA012303,
and the National Center for Mathematics and Interdisciplinary Sciences, CAS.}}

\author{Jun Fang \thanks{LSEC, Institute of Computational Mathematics and
Scientific/Engineering Computing, Academy of Mathematics and Systems
Science, Chinese Academy of Sciences, Beijing 100190, China and
Graduate University of Chinese Academy of Sciences, Beijing 100190,
China ({\tt fangjun@lsec.cc.ac.cn}).}
\and Xingyu Gao \thanks{HPCC, Institute of Applied Physics and Computational
Mathematics, Beijing 100094, China (\tt{gao\_xingyu@iapcm.ac.cn}).}
\and Aihui Zhou\thanks{LSEC, Institute of Computational Mathematics and
Scientific/Engineering Computing, Academy of Mathematics and Systems Science,
Chinese Academy of Sciences, Beijing 100190, China ({\tt azhou@lsec.cc.ac.cn}).}}

\date{}
\maketitle

{\bf Abstract.} 
In this paper, we propose a decomposition approach for eigenvalue problems
with spatial symmetries, including the formulation, discretization
as well as implementation.
This approach can handle eigenvalue problems with either Abelian or
non-Abelian symmetries, and is friendly for grid-based discretizations
such as finite difference, finite element or finite volume methods.
With the formulation, we divide the original eigenvalue problem into
a set of subproblems and require only a smaller number of eigenpairs
for each subproblem.
We implement the decomposition approach with finite elements and
parallelize our code in two levels.
We show that the decomposition approach can improve the efficiency
and scalability of iterative diagonalization.
In particular, we apply the approach to solving Kohn--Sham equations of
symmetric molecules consisting of hundreds of atoms.
\vskip 0.2cm

{\bf Keywords.} Eigenvalue, Grid-based discretization, Symmetry, Group theory,
Two-level parallelism.

\section{Introduction}
Efficient numerical methods for differential eigenvalue problems
become significant in scientific and engineering computations.
For instance, many properties of molecular systems or solid-state materials
are determined by solving Schr{\"o}dinger-type eigenvalue problems,
such as Hartree--Fock or Kohn--Sham equations \cite{cances03,martin04,young01};
the vibration analysis of complex structures is achieved by solving
the eigenvalue problems derived from the equation of motion
\cite{bennighof04,craig68,hurty60}.
We understand that a lot of eigenpairs have to be computed when
the size of the molecular system is large in electronic structure study,
or the frequency range of interest is increased in structural analysis.
To obtain accurate approximations, we see that a large number of
degrees of freedom should be employed in discretizations.

Since the computational cost grows in proportion to $N_e^2N$,
where $N_e$ is the number of required eigenpairs
and $N$ the number of degrees of freedom,
we should decompose such large-scale eigenvalue problems over domain or
over required eigenpairs.
However, it is not easy to decompose an eigenvalue problem because
the problem has an intrinsic nonlinearity and
is set as a global optimization problem with orthonormal constraints.
We observe that the existing efficient domain decomposition methods
for boundary value problems usually do not work well for eigenvalue problems.

For an eigenvalue problem with symmetries, we are happy to see that
the symmetries may provide a way to do decomposition.
Mathematically, each symmetry corresponds to an operator,
such as a reflection, a rotation or an inversion,
that leaves the object or problem invariant.
Group theory provides a systematic way to exploit symmetries
\cite{bishop93,cornwell97,cotton90,jones60,tinkham64,wigner59}.
Using group representation theory, we may decompose the eigenspace into
some orthogonal subspaces. More precisely, the decomposed subspaces have
distinct symmetries and are orthogonal to each other. However, there are
real difficulties in the implementation of
using symmetries \cite{banjai07,bossavit93}.

We see from quantum physics and quantum chemistry that
people use the so-called symmetry-adapted bases to
approximate eigenfunctions in such orthogonal subspaces.
The symmetry-adapted bases are constructed from specific basis functions
like atomic orbitals, internal coordinates of a molecule, or
orthogonalized plane waves \cite{bishop93,cornwell97,cotton90}.
A case-by-case illustration of the way to construct these bases from
atomic orbitals has been given in \cite{cotton90}, from which we can see that
the construction of symmetry-adapted bases is not an easy task.

We observe that grid-based discretizations,
such as finite difference, finite element and finite volume methods,
are widely used in scientific and engineering computations
\cite{babuska89,babuska91,chatelin83,dai-yang-zhou08,hackbusch92}.
For instance, the finite element method is often used to discretize
eigenvalue problems in structural analysis
\cite{bennighof04,hurty60,zienkiewicz05}.
In the last two decades, grid-based discretization approaches have been
successfully applied to modern electronic structure calculations,
see \cite{beck00,dai11,pask05,tors06} and reference cited therein.
In particular, grid-based discretizations have good locality and
have been proven to be well accommodated to peta-scale computing
by treating extremely large-scale eigenvalue problems arising from
the electron structure calculations \cite{hasegawa11,iwata10}.
Note that grid-based discretizations usually come with a large number of
degrees of freedom. And finite difference methods do not have basis functions
in the classical sense. These facts increase the numerical difficulty to
construct symmetry-adapted bases.

In this paper, we propose a new decomposition approach to
differential eigenvalue problems with symmetries,
which is friendly for grid-based discretizations and does not need the 
explicit construction of symmetry-adapted bases.
We decompose an eigenvalue problem with Abelian or non-Abelian symmetries
into a set of eigenvalue subproblems characterized by
distinct conditions derived from group representation theory.
We use the characteristic conditions directly in grid-based
discretizations to form matrix eigenvalue problems.
Beside the decomposition approach, we provide a construction procedure for
the symmetry-adapted bases.
Then we illustrate the equivalence between our approach
and the approach that constructs symmetry-adapted bases,
by deducing the exact relation between the two discretized problems.

We implement the decomposition approach based on finite element discretizations.
Subproblems corresponding to different irreducible representations can be
solved independently. Accordingly, we parallelize our code in two levels,
including a fundamental level of spatial parallelization and
another level of subproblem distribution.
We apply the approach to solving the Kohn--Sham equation of some cluster
systems with symmetries. Our computations show that the decomposition
approach would be appreciable for large-scale eigenvalue problems.
The implementation techniques can be adapted to finite difference and
finite volume methods, too.

The computational overhead and memory requirement can be reduced by
our decomposition approach.
Required eigenpairs for the original problem are distributed
among subproblems; namely, only a smaller number of eigenpairs
are needed for each subproblem.
And subproblems can be solved in a small subdomain.
Here we give an example to illustrate the effectiveness of
the decomposition approach.
Consider the eigenvalue problem for the Laplacian in domain $(-1,1)^3$
with zero boundary condition, and solve the first 1000 smallest eigenvalues
and associated eigenfunctions.
We decompose the eigenvalue problem into 8 decoupled eigenvalue subproblems
by applying Abelian group $D_{2h}$ which has 8 symmetry operations.
The number of computed eigenpairs for each subproblem is 155, and
the number of degrees of freedom for solving each subproblem, 205,379, is
one eighth of that for the original problem, 1,643,032.
We obtain a speedup of 28.8 by solving 8 subproblems
instead of the original problem.

We should mention that group theory has been introduced to
partial differential equations arising from structural analysis in
\cite{bossavit86, bossavit93}, which mainly focused on boundary value problems
and did not provide any numerical result.
We understand that the design and implementation of decomposition methods
for eigenvalue problems are different from boundary value problems.
We also see that Abelian symmetries have been utilized to simplify the solving
of Kohn--Sham equations in a finite difference code \cite{kronik06}.
However, the implementation in \cite{kronik06} is only applicable to
Abelian groups, in which any two symmetry operations are commutative.
Even in quantum chemistry, most software packages only utilize Abelian groups
\cite{young01}.
In some plane-wave softwares of electronic structure calculations,
symmetries are used to simplify the solving of Kohn--Sham equations by
reducing the number of $k$-points to the irreducible Brillouin zone (IBZ).
For a given $k$-point, they do not classify the eigenstates and thus still
solve the original eigenvalue problem.

The rest of this paper is organized as follows.
In Section~\ref{sec:formulation}, we show the symmetry-based decomposition of
eigenvalue problems, and propose a subproblem formulation proper for
grid-based discretizations.
Then in Section~\ref{sec:discretization}, we give matrix eigenvalue problems
derived from the subproblem formulation
and provide a construction procedure for the symmetry-adapted bases,
from which we deduce the relation of our discretized problems to those
formed by symmetry-adapted bases.
We quantize the decrease in computational cost when using the decomposition
approach in Section~\ref{sec:cost}.
And in Section~\ref{sec:impl} we present some critical implementation issues.
In Section~\ref{sec:numer}, we give numerical examples to
validate our implementation for Abelian and non-Abelian symmetry groups
and show the reduction in computational and communicational overhead;
then we apply the decomposition approach to solving the Kohn--Sham equation
of three symmetric molecular systems with hundreds of atoms.
Finally, we give some concluding remarks.

\section{Decomposition formulation}\label{sec:formulation}
In this section, we recall several basic but useful results of group theory
and propose a symmetry-based decomposition formulation.
The formulation, summarized as Theorem \ref{thm:subproblem} and
Corollary \ref{crl:subproblem1},
can handle eigenvalue problems with Abelian or non-Abelian symmetries.
Some notation and concepts will be given in Appendix A.

\subsection{Representation, basis function, and projection operator}
We start from orthogonal coordinate transformations in
$\mathbb{R}^d~(d=1,2,3)$ such as a rotation, a reflection or an inversion,
that form a finite group $G$ of order $g$.
Let $\Omega\subset\mathbb{R}^d$ be a bounded domain and
$V\subset L^2(\Omega)$ a Hilbert space of functions on $\Omega$
equipped with the $L^2$ scalar product $(\cdot,\cdot)$.
Each $R\in G$ corresponds to an operator $P_R$ on $f\in V$ as
\[ P_R f(Rx) = f(x)\quad\forall x\in\Omega. \]
It is proved that $\{P_R: R\in G\}$ form a group isomorphic to $G$.

A matrix representation of group $G$ means a group of matrices
which is homomorphic to $G$.
Any matrix representation with nonvanishing determinants is equivalent to
a representation by unitary matrices (referred to as unitary representation).
In the following we focus on unitary representations of group $G$.

The great orthogonality theorem
(cf. \cite{cornwell97,jones60,tinkham64,wigner59}) tells that,
all the inequivalent, irreducible, unitary representations
$\{\Gamma^{(\nu)}\}$ of group $G$ satisfy
\begin{equation}
    \label{eq:great_orth}
    \sum_{R\in G} {\Gamma^{(\nu)}(R)}^*_{ml}
    \Gamma^{(\nu')}(R)_{m'l'} = \delta_{\nu\nu'} \delta_{mm'} \delta_{ll'}
    \frac{g}{d_{\nu}}
\end{equation}
for any $l,m\in\{1,2,\ldots,d_{\nu}\}$ and $l',m'\in\{1,2,\ldots,d_{\nu'}\}$,
where $d_{\nu}$ denotes the dimensionality of the $\nu$-th representation
$\Gamma^{(\nu)}$ and ${\Gamma^{(\nu)}(R)}^*_{ml}$ is the complex conjugate
of $\Gamma^{(\nu)}(R)_{ml}$.
The number of all the inequivalent, irreducible, unitary representations
is equal to the number of classes in $G$. We denote this number as $n_c$.

\begin{definition}
    Given $\nu\in\{1,2,\ldots,n_c\}$, non-zero functions
    $\{\phi^{(\nu)}_l:l=1,2,\ldots,d_{\nu}\} \subset V$
    are said to form a basis for $\Gamma^{(\nu)}$ if for any
    $l\in\{1,2,\ldots,{d_{\nu}}\}$
    \begin{equation}
        \label{eq:basis}
        P_R\phi^{(\nu)}_l = \sum_{m=1}^{d_{\nu}}
        \phi^{(\nu)}_{m}\Gamma^{(\nu)}(R)_{ml}\quad\forall R\in G.
    \end{equation}
    Function $\phi^{(\nu)}_l$ is called to belong to the $l$-th column of
    $\Gamma^{(\nu)}$ (or adapt to the $\nu$-$l$ symmetry),
    and $\{\phi^{(\nu)}_{m}:m=1,2,\ldots,d_{\nu},m\ne l\}$ are its partners.
\end{definition}

There holds an orthogonality property for the basis functions
(cf. \cite{tinkham64,wigner59}):
if $\{\phi^{(\nu)}_{l}:l=1,2,\ldots,d_{\nu}\}$ and
$\{\psi^{(\nu')}_{l'}:l'=1,2,\ldots,d_{\nu'}\}$ are basis functions
for irreducible representations $\Gamma^{(\nu)}$ and $\Gamma^{(\nu')}$,
respectively, then
\begin{equation}
    \label{eq:two_orth}
    (\phi_l^{(\nu)}, \psi_{l'}^{(\nu')}) =
    \delta_{\nu\nu'} \delta_{ll'} d_{\nu}^{-1}
    \sum_{m=1}^{d_{\nu}} (\phi_{m}^{(\nu)}, \psi_{m}^{(\nu)})
\end{equation}
holds for any $l\in\{1,2,\ldots,d_{\nu}\}$ and $l'\in\{1,2,\ldots,d_{\nu'}\}$.
This equation implies that, two functions are orthogonal if they belong to
different irreducible representations
or to different columns of the same unitary representation.
And the scalar product of two functions belonging to the same column of
a given unitary representation (or adapting to the same symmetry)
is independent of the column label.

Multiplying equation (\ref{eq:basis}) by ${\Gamma^{(\nu')}(R)}^*_{m'l'}$
and summing over $R$, the great orthogonality theorem (\ref{eq:great_orth})
implies that
\[ \sum_{R\in G} {\Gamma^{(\nu')}(R)}^*_{m'l'} P_R\phi^{(\nu)}_l
= \delta_{\nu\nu'} \delta_{ll'} \frac{g}{d_{\nu}} \phi^{(\nu)}_{m'}
\quad\forall~l',m'\in\{1,2,\ldots,d_{\nu'}\},~l\in\{1,2,\ldots,{d_{\nu}}\}. \]
Define for any $\nu\in\{1,2,\ldots,n_c\}$ and $l,m\in\{1,2,\ldots,d_{\nu}\}$
operator $\mathscr{P}^{(\nu)}_{ml}$ as
\begin{equation}
    \label{eq:Pml_def}
    \mathscr{P}^{(\nu)}_{ml} = \frac{d_{\nu}}{g}
    \sum_{R\in G} {\Gamma^{(\nu)}(R)}^*_{ml} P_R,
\end{equation}
we get
\begin{equation}
    \label{eq:Pml_basic}
    \mathscr{P}^{(\nu)}_{ml} \phi^{(\nu')}_{l'} =
    \delta_{\nu\nu'} \delta_{ll'} \phi^{(\nu)}_{m}
\end{equation}
for any $\nu,\nu'\in\{1,2,\ldots,n_c\}$, $l,m\in\{1,2,\ldots,d_{\nu}\}$,
and $l'\in\{1,2,\ldots,{d_{\nu'}}\}$.

\begin{proposition}
    \label{prop:basis}
    Given $\nu\in\{1,2,\ldots,n_c\}$ and $k\in\{1,2,\ldots,d_{\nu}\}$.
    If $v\in V$ satisfies $\mathscr{P}^{(\nu)}_{kk} v\ne 0$,
    then $\{\mathscr{P}^{(\nu)}_{lk} v:l=1,2,\ldots,d_{\nu}\}$
    form a basis for $\Gamma^{(\nu)}$, i.e.,
    $\{\mathscr{P}^{(\nu)}_{lk} v:l=1,2,\ldots,d_{\nu}\}$
    are non-zero functions, and for any $l\in\{1,2,\ldots,d_{\nu}\}$
    \[ P_R \left(\mathscr{P}^{(\nu)}_{lk}v\right) = \sum_{m=1}^{d_{\nu}}
    \left(\mathscr{P}^{(\nu)}_{mk}v\right) \Gamma^{(\nu)}(R)_{ml}
    \quad\forall R\in G. \]
\end{proposition}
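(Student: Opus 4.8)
The plan is to reduce the proposition to two algebraic identities for the operators $\mathscr{P}^{(\nu)}_{ml}$, both consequences of the group isomorphism $P_S P_R = P_{SR}$ together with unitarity of $\Gamma^{(\nu)}$ and the great orthogonality theorem (\ref{eq:great_orth}); the transformation law will be immediate from the first identity, so the real content is the non-vanishing claim. First I would establish the covariance identity
\[
    P_S \mathscr{P}^{(\nu)}_{lk} = \sum_{m=1}^{d_{\nu}}
    \Gamma^{(\nu)}(S)_{ml}\, \mathscr{P}^{(\nu)}_{mk}.
\]
Starting from (\ref{eq:Pml_def}), I would pull $P_S$ inside the sum, use $P_S P_R = P_{SR}$, and change variables $T = SR$. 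The coefficient ${\Gamma^{(\nu)}(S^{-1}T)}^*_{lk}$ is then expanded via $\Gamma^{(\nu)}(S^{-1}T) = \Gamma^{(\nu)}(S)^{-1}\Gamma^{(\nu)}(T)$ and the unitarity relation $\Gamma^{(\nu)}(S)^{-1}_{lp} = {\Gamma^{(\nu)}(S)}^*_{pl}$, and regrouping the $T$-sum back into $\mathscr{P}^{(\nu)}_{mk}$ yields the display. Applying both sides to $v$ and renaming $S$ as $R$ gives precisely the required transformation law.

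Next I would prove the product identity
\[
    \mathscr{P}^{(\nu)}_{ml}\, \mathscr{P}^{(\nu')}_{m'l'}
    = \delta_{\nu\nu'}\, \delta_{lm'}\, \mathscr{P}^{(\nu)}_{ml'}.
\]
Expanding the product from (\ref{eq:Pml_def}) as a double sum over $R,S$, using $P_R P_S = P_{RS}$ and substituting $T = RS$, I would rewrite ${\Gamma^{(\nu')}(R^{-1}T)}^*_{m'l'}$ by unitarity as in the first step. This isolates the inner sum $\sum_{R} {\Gamma^{(\nu)}(R)}^*_{ml}\,\Gamma^{(\nu')}(R)_{pm'}$, to which (\ref{eq:great_orth}) applies and collapses it to $\delta_{\nu\nu'}\delta_{mp}\delta_{lm'}\,g/d_{\nu}$; resumming over $T$ and simplifying the prefactors returns the stated identity. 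This computation, with its two changes of variable and four index pairs to track, is the main obstacle; everything else is routine.

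Finally, the non-vanishing follows by specializing the product identity to $\mathscr{P}^{(\nu)}_{kl}\,\mathscr{P}^{(\nu)}_{lk} = \mathscr{P}^{(\nu)}_{kk}$ (take $\nu'=\nu$, $m=k$, $m'=l$, $l'=k$, so that $\delta_{lm'}=1$). Applying this to $v$ gives $\mathscr{P}^{(\nu)}_{kl}\bigl(\mathscr{P}^{(\nu)}_{lk}v\bigr) = \mathscr{P}^{(\nu)}_{kk}v \ne 0$ by hypothesis. Since any linear operator maps $0$ to $0$, this forces $\mathscr{P}^{(\nu)}_{lk}v \ne 0$ for every $l\in\{1,\ldots,d_{\nu}\}$. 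Combined with the covariance identity, this shows that $\{\mathscr{P}^{(\nu)}_{lk}v:l=1,\ldots,d_{\nu}\}$ is a basis for $\Gamma^{(\nu)}$, completing the proof.
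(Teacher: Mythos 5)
Your proposal is correct and follows essentially the same route as the paper: the transformation law is obtained exactly as in the paper's proof (pull $P_R$ inside the sum, substitute $T=SR$, and use unitarity of $\Gamma^{(\nu)}$), and the non-vanishing claim rests on the identity $\mathscr{P}^{(\nu)}_{kl}\bigl(\mathscr{P}^{(\nu)}_{lk}v\bigr)=\mathscr{P}^{(\nu)}_{kk}v$, which is also the paper's key step. The only difference is organizational: you derive that identity by first proving the general product formula $\mathscr{P}^{(\nu)}_{ml}\mathscr{P}^{(\nu')}_{m'l'}=\delta_{\nu\nu'}\delta_{lm'}\mathscr{P}^{(\nu)}_{ml'}$ (which the paper states separately as Proposition \ref{prop:Pml_property}(b) and proves in Appendix B), whereas the paper obtains the special case directly by applying the great orthogonality theorem to the function $\mathscr{P}^{(\nu)}_{lk}v$, which it has just shown to transform as a basis function.
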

\begin{proof}
    For any $l\in\{1,2,\ldots,d_{\nu}\}$, we obtain from (\ref{eq:Pml_def})
    that
    \[ P_R \left(\mathscr{P}^{(\nu)}_{lk}v\right) =
    \frac{d_{\nu}}{g} \sum_{S\in G}{\Gamma^{(\nu)}(S)}^*_{lk} P_{RS}~v
    = \frac{d_{\nu}}{g}
    \sum_{S'\in G} {\Gamma^{(\nu)}(R^{-1}S')}^*_{lk} P_{S'}v
    \quad\forall R\in G, \]
    where $P_RP_S=P_{RS}$ because $\{P_R: R\in G\}$ form a group isomorphic
    to $G$.

    Since $\Gamma^{(\nu)}$ is a unitary representation of $G$, we have
    \[ P_R \left(\mathscr{P}^{(\nu)}_{lk}v\right) =
    \frac{d_{\nu}}{g}\sum_{S'\in G} \left( \sum_{m=1}^{d_{\nu}}
    \Gamma^{(\nu)}(R)_{ml} {\Gamma^{(\nu)}(S')}^*_{mk} \right) P_{S'}v, \]
    or
    \[ P_R \left(\mathscr{P}^{(\nu)}_{lk}v\right) =
    \sum_{m=1}^{d_{\nu}} \left(\mathscr{P}^{(\nu)}_{mk}v\right)
    \Gamma^{(\nu)}(R)_{ml}\quad\forall R\in G. \]

    Recall the way to achieve (\ref{eq:Pml_basic}), we see from the above
    equation and the great orthogonality theorem that
    \[ \mathscr{P}^{(\nu)}_{kk}v =
    \mathscr{P}^{(\nu)}_{kl} \left(\mathscr{P}^{(\nu)}_{lk}v\right)
    \quad\forall~l\in\{1,2,\ldots,d_{\nu}\}. \]
    So $\mathscr{P}^{(\nu)}_{kk} v\ne 0$ indicates
    $\mathscr{P}^{(\nu)}_{lk} v\ne 0$ for all $l\in\{1,2,\ldots,d_{\nu}\}$.
    This completes the proof.
\qquad\end{proof}

If we set $\nu'=\nu$, $l'=l$ and $m=l$ in (\ref{eq:Pml_basic}), then we have
for any $\nu\in\{1,2,\ldots,n_c\}$
\begin{equation}
    \label{eq:Pml_special}
    \mathscr{P}^{(\nu)}_{ll} \phi^{(\nu)}_l
    = \phi^{(\nu)}_l\quad\forall~l\in\{1,2,\ldots,d_{\nu}\}.
\end{equation}
Proposition \ref{prop:basis} implies that (\ref{eq:Pml_special}) serves to
characterize the labels of any basis function:
\begin{corollary}
    \label{crl:characterize}
    Given $\nu\in\{1,2,\ldots,n_c\}$ and $l\in\{1,2,\ldots,d_{\nu}\}$.
    Non-zero function $v\in V$ belongs to the $l$-th column of $\Gamma^{(\nu)}$
    (or adapts to the $\nu$-$l$ symmetry) if and only if
    \[ \mathscr{P}^{(\nu)}_{ll} v = v. \]
\end{corollary}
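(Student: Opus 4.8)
The plan is to establish the two implications of the equivalence separately, and both reduce to results already in hand. Writing $\mathscr{P}^{(\nu)}_{ll}v = v$ as the characterizing condition, the ``only if'' direction is essentially a restatement of (\ref{eq:Pml_special}), while the ``if'' direction is where Proposition \ref{prop:basis} does the real work. In both cases the hypothesis $v\ne 0$ must be tracked, since it is exactly what lets me invoke the proposition.

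For necessity, suppose $v$ belongs to the $l$-th column of $\Gamma^{(\nu)}$. By definition this means there is a basis $\{\phi^{(\nu)}_m:m=1,\ldots,d_{\nu}\}$ for $\Gamma^{(\nu)}$ with $v=\phi^{(\nu)}_l$, and basis functions are non-zero. Then I would simply invoke (\ref{eq:Pml_special}), which asserts $\mathscr{P}^{(\nu)}_{ll}\phi^{(\nu)}_l = \phi^{(\nu)}_l$, to conclude $\mathscr{P}^{(\nu)}_{ll}v = v$. No computation beyond quoting that identity is required.

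For sufficiency, suppose $v\ne 0$ and $\mathscr{P}^{(\nu)}_{ll}v = v$. Then $\mathscr{P}^{(\nu)}_{ll}v = v\ne 0$, so the hypothesis of Proposition \ref{prop:basis} is met with $k=l$. That proposition then guarantees that $\{\mathscr{P}^{(\nu)}_{ml}v:m=1,\ldots,d_{\nu}\}$ forms a basis for $\Gamma^{(\nu)}$. The crux is to read off that the $l$-th member of this basis is $\mathscr{P}^{(\nu)}_{ll}v$, which equals $v$ by assumption; hence $v$ is itself the $l$-th basis function and therefore belongs to the $l$-th column of $\Gamma^{(\nu)}$.

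I do not expect a genuine obstacle here, since the heavy lifting---constructing a full basis from a single non-zero projection---has already been carried out in Proposition \ref{prop:basis}. The only point requiring care is the index bookkeeping in the sufficiency direction: one must set the free index $k$ of the proposition equal to $l$ and recognize that the self-projection $\mathscr{P}^{(\nu)}_{ll}v$ is precisely the $l$-th entry of the resulting basis, so that the fixed-point condition $\mathscr{P}^{(\nu)}_{ll}v = v$ identifies $v$ with that entry. This matching of labels, rather than any analytic difficulty, is where a careless argument could go astray.
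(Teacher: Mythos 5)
Your argument is correct and is exactly the one the paper intends: the "only if" direction is the identity (\ref{eq:Pml_special}), and the "if" direction applies Proposition \ref{prop:basis} with $k=l$ and identifies $v=\mathscr{P}^{(\nu)}_{ll}v$ as the $l$-th member of the resulting basis. The paper states the corollary as an immediate consequence of these two facts without writing out the details, so your proposal simply makes that same reasoning explicit.
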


We will use the following properties of operator $\mathscr{P}^{(\nu)}_{ml}$,
whose proof is given in Appendix B.
\begin{proposition}
    \label{prop:Pml_property}
    Let $\nu,\nu'\in\{1,2,\ldots,n_c\}$, $l,m\in\{1,2,\ldots,d_{\nu}\}$,
    and $l',m'\in\{1,2,\ldots,d_{\nu'}\}$.

    (a) The adjoint of operator $\mathscr{P}^{(\nu)}_{ml}$ satisfies
    \[ {\mathscr{P}^{(\nu)}_{ml}}^* = \mathscr{P}^{(\nu)}_{lm}. \]

    (b) The multiplication of two operators $\mathscr{P}^{(\nu)}_{ml}$
    and $\mathscr{P}^{(\nu')}_{m'l'}$ satisfies
    \[ \mathscr{P}^{(\nu)}_{ml} \mathscr{P}^{(\nu')}_{m'l'} =
    \delta_{\nu\nu'}\delta_{lm'} \mathscr{P}^{(\nu)}_{ml'}. \]
\end{proposition}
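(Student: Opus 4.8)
The plan is to prove both identities by direct manipulation of the defining sum (\ref{eq:Pml_def}), leaning on three ingredients: the fact that each $P_R$ is a unitary operator on $V$ (so that $P_R^* = P_{R^{-1}}$), the unitarity of the representation $\Gamma^{(\nu)}$ (so that $\Gamma^{(\nu)}(R^{-1})_{ml} = {\Gamma^{(\nu)}(R)}^*_{lm}$ and $\Gamma^{(\nu)}(RS) = \Gamma^{(\nu)}(R)\Gamma^{(\nu)}(S)$), and the great orthogonality theorem (\ref{eq:great_orth}). No deeper structural input is needed; both parts are essentially exercises in reindexing the finite group sum and collapsing Kronecker deltas.

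For part (a), I would first record that each $P_R$ preserves the $L^2$ scalar product: since $R$ is an orthogonal coordinate transformation it maps $\Omega$ onto itself with Jacobian of modulus one, hence $P_R$ is unitary and $P_R^* = P_R^{-1} = P_{R^{-1}}$. Taking the adjoint of (\ref{eq:Pml_def}) then conjugates each coefficient ${\Gamma^{(\nu)}(R)}^*_{ml}$ into $\Gamma^{(\nu)}(R)_{ml}$ and turns each $P_R$ into $P_{R^{-1}}$. Reindexing the sum by $S = R^{-1}$, a bijection of $G$, and invoking unitarity of $\Gamma^{(\nu)}$ in the form $\Gamma^{(\nu)}(S^{-1})_{ml} = {\Gamma^{(\nu)}(S)}^*_{lm}$ rewrites the coefficient as ${\Gamma^{(\nu)}(S)}^*_{lm}$, which is exactly the coefficient defining $\mathscr{P}^{(\nu)}_{lm}$. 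This yields ${\mathscr{P}^{(\nu)}_{ml}}^* = \mathscr{P}^{(\nu)}_{lm}$.

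For part (b), I would write the product as the double sum $\frac{d_\nu d_{\nu'}}{g^2}\sum_{R,S} {\Gamma^{(\nu)}(R)}^*_{ml}{\Gamma^{(\nu')}(S)}^*_{m'l'} P_R P_S$, use $P_R P_S = P_{RS}$, and substitute $T = RS$ so that $S = R^{-1}T$. Expanding $\Gamma^{(\nu')}(R^{-1}T) = \Gamma^{(\nu')}(R^{-1})\Gamma^{(\nu')}(T)$ and applying unitarity gives ${\Gamma^{(\nu')}(R^{-1}T)}^*_{m'l'} = \sum_{n} \Gamma^{(\nu')}(R)_{nm'} {\Gamma^{(\nu')}(T)}^*_{nl'}$. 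After interchanging the order of summation, the inner sum over $R$ is precisely the left-hand side of the great orthogonality theorem with label pairs $(m,l)$ and $(n,m')$, so it equals $\delta_{\nu\nu'}\delta_{mn}\delta_{lm'}\,g/d_\nu$. The deltas force $\nu'=\nu$ and $m'=l$, collapse the sum over $n$ to $n=m$, and the surviving factor is $\frac{d_\nu}{g}\sum_T {\Gamma^{(\nu)}(T)}^*_{ml'} P_T = \mathscr{P}^{(\nu)}_{ml'}$, giving the claimed $\delta_{\nu\nu'}\delta_{lm'}\mathscr{P}^{(\nu)}_{ml'}$.

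I expect the only genuinely delicate point to be index bookkeeping: keeping straight which index is conjugated and transposed when unitarity is applied (for instance distinguishing $\Gamma^{(\nu')}(R^{-1})_{m'n} = {\Gamma^{(\nu')}(R)}^*_{nm'}$ from its conjugate), and matching the label pattern in the great orthogonality theorem exactly so that the correct deltas appear. The reindexing steps $S=R^{-1}$ and $T=RS$ are harmless because the summation runs over all of the finite group $G$. Establishing $P_R^* = P_{R^{-1}}$ is the one place where a geometric hypothesis, namely invariance of $\Omega$ under $G$ together with measure preservation, is used rather than pure representation theory.
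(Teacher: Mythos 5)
Your proposal is correct and follows essentially the same route as the paper's proof in Appendix~B: part (a) via $P_R^*=P_{R^{-1}}$, the reindexing $S=R^{-1}$, and unitarity of $\Gamma^{(\nu)}$; part (b) via $P_RP_S=P_{RS}$, the substitution $S'=RS$, expansion of $\Gamma^{(\nu')}(R^{-1}S')$, and the great orthogonality theorem. The index bookkeeping and the resulting Kronecker deltas match the paper's computation exactly.
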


\subsection{Subproblems}
We see from Corollary \ref{crl:characterize} and
the linearity of operator $\mathscr{P}^{(\nu)}_{ll}$ that,
all functions in $V$ belonging to the $l$-th column of $\Gamma^{(\nu)}$
(or adapting to the $\nu$-$l$ symmetry) form a subspace of $V$.
We denote this subspace by $V_l^{(\nu)}$.

There holds a decomposition theorem for any function in $V$
(cf. \cite{tinkham64,wigner59}):
any $f\in V$ can be decomposed into a sum of the form
\begin{equation}
    \label{eq:f_decomp}
    f = \sum_{\nu=1}^{n_c}\sum_{l=1}^{d_{\nu}}f_l^{(\nu)},
\end{equation}
where $f_l^{(\nu)}\in V_l^{(\nu)}$.
We see from (\ref{eq:Pml_basic}) and (\ref{eq:f_decomp}) that
$\mathscr{P}^{(\nu)}_{ll}:V\rightarrow V_l^{(\nu)}$ is a projection operator.
Equation (\ref{eq:f_decomp}) implies
\[ V = \sum_{\nu=1}^{n_c}\sum_{l=1}^{d_{\nu}} V_l^{(\nu)}, \]
which indeed is a direct sum
\begin{equation}
    \label{eq:V_decomp}
    V = \bigoplus_{\nu=1}^{n_c}\bigoplus_{l=1}^{d_{\nu}} V_l^{(\nu)}
\end{equation}
due to (\ref{eq:two_orth}).

Now we turn to study the symmetry-based decomposition for eigenvalue problems.
Consider eigenvalue problems of the form
\begin{equation}
    \label{eq:general}
    Lu = \lambda u \quad\mbox{in}~\Omega
\end{equation}
subject to some boundary condition, where $L$ is an Hermitian operator.
Group $G$ is said to be a symmetry group associated with eigenvalue problem
(\ref{eq:general}) if
\[ R\Omega = \Omega,~ P_R L = L P_R \quad\forall R\in G, \]
and the subjected boundary condition is also invariant under $\{P_R\}$.
Then any $R\in G$ is called a symmetry operation for problem
(\ref{eq:general}).
For simplicity, we take zero boundary condition as an example and
discuss the decomposition of eigenvalue problem
\begin{equation}
    \label{eq:ori-problem}
    \left\{
    \begin{array}{rcll}
        Lu & = &\lambda u &\quad\mbox{in}~\Omega,\\
        u & = &0 &\quad\mbox{on}~\partial\Omega.
    \end{array}
    \right.
\end{equation}

Since $P_R$ and $L$ are commutative for any $R$ in $G$, we have:
\begin{proposition}
    \label{prop:Lv}
    If $v\in V_l^{(\nu)}$, then $Lv\in V_l^{(\nu)}$, where
    $\nu\in\{1,2,\ldots,n_c\}$ and $l\in\{1,2,\ldots,d_{\nu}\}$.
    In other words, $V_l^{(\nu)}$ is an invariant subspace of operator $L$.
\end{proposition}

The direct sum decomposition of space $V$ and Proposition \ref{prop:Lv}
indicate a decomposition of the eigenvalue problem.
\begin{theorem}
    \label{thm:subproblem}
    Suppose finite group $G=\{R\}$ is a symmetry group associated with
    eigenvalue problem (\ref{eq:ori-problem}).
    Denote all the inequivalent, irreducible, unitary representations of $G$
    as $\{\Gamma^{(\nu)}:\nu=1,2,\ldots,n_c\}$.
    Then the eigenvalue problem can be decomposed into
    $\sum_{\nu=1}^{n_c}d_{\nu}$ subproblems.
    For any $\nu\in\{1,2,\ldots,n_c\}$,
    the corresponding $d_{\nu}$ subproblems are
    \begin{equation}
        \label{eq:subproblem}
        \left\{
        \begin{array}{rcll}
            Lu_l^{(\nu)} & = &\lambda^{(\nu)} u_l^{(\nu)}
            &\quad\mbox{in}~\Omega,\\
            u_l^{(\nu)} & = &0 &\quad\mbox{on}~\partial\Omega,
            \qquad l=1,2,\ldots,d_{\nu},\\
            u_l^{(\nu)} &=& \mathscr{P}^{(\nu)}_{lk}u_k^{(\nu)}
            &\quad\mbox{in}~\Omega,
        \end{array}
        \right.
    \end{equation}
    where $k$ is any chosen number in $\{1,2,\ldots,d_{\nu}\}$.
\end{theorem}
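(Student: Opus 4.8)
The plan is to build the decomposition on the two structural facts already in hand: the orthogonal direct sum $V=\bigoplus_{\nu}\bigoplus_{l}V_l^{(\nu)}$ from (\ref{eq:V_decomp}), and the $L$-invariance of each summand from Proposition~\ref{prop:Lv}. Granting these, the eigenvalue problem on $V$ splits, column by column, into independent problems on the $V_l^{(\nu)}$; what remains is to show that for a fixed $\nu$ the $d_\nu$ columns are not genuinely independent but are tied together by the shift operators $\mathscr{P}^{(\nu)}_{lk}$, so that the third line of (\ref{eq:subproblem}) is both consistent and sufficient. Throughout, the single algebraic engine is that $L$ commutes with every $\mathscr{P}^{(\nu)}_{ml}$: since (\ref{eq:Pml_def}) exhibits $\mathscr{P}^{(\nu)}_{ml}$ as a fixed linear combination of the $P_R$ and $P_RL=LP_R$ for all $R\in G$ by hypothesis, linearity gives $L\mathscr{P}^{(\nu)}_{ml}=\mathscr{P}^{(\nu)}_{ml}L$.

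I would first verify soundness, that any solution of (\ref{eq:subproblem}) produces eigenfunctions of (\ref{eq:ori-problem}). Starting from a solution $u_k^{(\nu)}$ of the $k$-th column with $Lu_k^{(\nu)}=\lambda^{(\nu)}u_k^{(\nu)}$ and $\mathscr{P}^{(\nu)}_{kk}u_k^{(\nu)}=u_k^{(\nu)}$ (Corollary~\ref{crl:characterize} places it in $V_k^{(\nu)}$, and being an eigenfunction it is non-zero), I define the partners $u_l^{(\nu)}=\mathscr{P}^{(\nu)}_{lk}u_k^{(\nu)}$. Since $\mathscr{P}^{(\nu)}_{kk}u_k^{(\nu)}\ne0$, Proposition~\ref{prop:basis} guarantees each $u_l^{(\nu)}$ is non-zero and that the family forms a basis for $\Gamma^{(\nu)}$, whence $u_l^{(\nu)}\in V_l^{(\nu)}$ by Corollary~\ref{crl:characterize}; commuting $L$ through $\mathscr{P}^{(\nu)}_{lk}$ gives $Lu_l^{(\nu)}=\mathscr{P}^{(\nu)}_{lk}(Lu_k^{(\nu)})=\lambda^{(\nu)}u_l^{(\nu)}$, so every partner solves the original equation with the same eigenvalue, and the zero boundary condition is inherited because the $P_R$, hence $\mathscr{P}^{(\nu)}_{lk}$, carry functions vanishing on $\partial\Omega$ to functions vanishing on $\partial\Omega$, $\partial\Omega$ being $G$-invariant.

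Next I would treat completeness, that no eigenpair is lost. Given $(\lambda,u)$ with $Lu=\lambda u$, the expansion (\ref{eq:f_decomp}) writes $u=\sum_{\nu}\sum_{l}u_l^{(\nu)}$ with $u_l^{(\nu)}=\mathscr{P}^{(\nu)}_{ll}u$ the projection onto $V_l^{(\nu)}$; applying $\mathscr{P}^{(\nu)}_{ll}$ to $Lu=\lambda u$ and commuting shows every non-zero component satisfies $Lu_l^{(\nu)}=\lambda u_l^{(\nu)}$, so it solves the corresponding subproblem with $\lambda^{(\nu)}=\lambda$. To match the partner form I use Proposition~\ref{prop:Pml_property}(b) together with (\ref{eq:Pml_special}): $\mathscr{P}^{(\nu)}_{kl}\mathscr{P}^{(\nu)}_{lk}=\mathscr{P}^{(\nu)}_{kk}$ acts as the identity on $V_k^{(\nu)}$ and symmetrically $\mathscr{P}^{(\nu)}_{lk}\mathscr{P}^{(\nu)}_{kl}=\mathscr{P}^{(\nu)}_{ll}$ on $V_l^{(\nu)}$, so $\mathscr{P}^{(\nu)}_{lk}:V_k^{(\nu)}\to V_l^{(\nu)}$ is an isomorphism with inverse $\mathscr{P}^{(\nu)}_{kl}$ that intertwines $L$. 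This makes the restrictions $L|_{V_l^{(\nu)}}$, $l=1,\dots,d_\nu$, mutually isospectral with eigenfunctions carried into one another by the shift operators, which is exactly the content of the coupling $u_l^{(\nu)}=\mathscr{P}^{(\nu)}_{lk}u_k^{(\nu)}$ and justifies solving only one column per $\nu$; counting columns gives $\sum_{\nu}d_\nu$ subproblems.

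The step I expect to be most delicate is this partner coupling, i.e.\ reconciling the intrinsic column projections $\mathscr{P}^{(\nu)}_{ll}u$ of a global eigenfunction with the partner-generated family $\mathscr{P}^{(\nu)}_{lk}u_k^{(\nu)}$. The subtlety is that an arbitrary global eigenfunction need not be a single symmetry-adapted vector, so within a degenerate eigenspace one must choose a basis aligned with the columns and check, via the great orthogonality relation (\ref{eq:Pml_basic}) and the non-vanishing hypothesis $\mathscr{P}^{(\nu)}_{kk}u_k^{(\nu)}\ne0$ underlying Proposition~\ref{prop:basis}, that the reconstruction is independent of the chosen reference column $k$ and free of double counting. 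Handling this bookkeeping carefully, rather than any single estimate, is where the real work lies; the analytic ingredients---invariance, commutation, and orthogonality---are all supplied by the earlier results.
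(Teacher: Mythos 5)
Your proposal is correct and follows essentially the same route as the paper: decompose $V$ via (\ref{eq:V_decomp}) and Proposition~\ref{prop:Lv} into $L$-invariant column subspaces, then use the commutation of $L$ with the shift operators $\mathscr{P}^{(\nu)}_{lk}$ (together with Propositions~\ref{prop:basis} and~\ref{prop:Pml_property}) to show the $d_\nu$ columns for a fixed $\nu$ are isospectral with eigenfunctions carried into one another, so that only one column need be solved. The paper additionally records explicitly that $(\mathscr{P}^{(\nu)}_{lk}v,\mathscr{P}^{(\nu)}_{lk}w)=(v,w)$ on $V_k^{(\nu)}$, which settles the orthogonality bookkeeping your last paragraph flags as delicate; otherwise the two arguments coincide.
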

\begin{proof}
    We see from (\ref{eq:V_decomp}) and Proposition \ref{prop:Lv} that,
    other than solving the eigenvalue problem in $V$,
    we can solve the problem in each subspace $V_l^{(\nu)}$ independently.
    More precisely, we can decompose the original eigenvalue problem
    (\ref{eq:ori-problem}) into $\sum_{\nu=1}^{n_c} d_{\nu}$ subproblems;
    for any $\nu\in\{1,2,\ldots,n_c\}$, the $d_{\nu}$ subproblems are
    as follows
    \begin{equation}
        \label{eq:subproblem0}
        \left\{
        \begin{array}{rcll}
            Lu_l^{(\nu)}  &=& \lambda_l^{(\nu)} u_l^{(\nu)}
            &\quad\mbox{in}~\Omega,\\
            u_l^{(\nu)} & = &0 &\quad\mbox{on}~\partial\Omega,
            \qquad l=1,2,\ldots,d_{\nu},\\
            \mathscr{P}^{(\nu)}_{ll}u_l^{(\nu)} &=& u_l^{(\nu)}
            &\quad\mbox{in}~\Omega,
        \end{array}
        \right.
    \end{equation}
    where the third equation characterizes $u_l^{(\nu)}\in V_l^{(\nu)}$,
    as indicated in Corollary \ref{crl:characterize}.

    Given any $\nu\in\{1,2,\ldots,n_c\}$, we consider the $d_{\nu}$
    subproblems (\ref{eq:subproblem0}).
    We shall prove that, for any $k,l\in\{1,2,\ldots,d_{\nu}\}$,
    if $v$ and $w$ are two orthogonal eigenfunctions corresponding to
    some eigenvalue of the $k$-th subproblem, then
    $\mathscr{P}^{(\nu)}_{lk}v$ and $\mathscr{P}^{(\nu)}_{lk}w$
    are eigenfunctions of the $l$-th subproblem with the same eigenvalue,
    and are also orthogonal.

    Combining (\ref{eq:Pml_basic}) and the fact that $P_R$ and $L$ are
    commutative for each $R$, we obtain that
    $\mathscr{P}^{(\nu)}_{lk}v$ is an eigenfunction of the $l$-th subproblem
    which corresponds to the same eigenvalue as the one for $v$.
    It remains to prove the orthogonality of
    $\mathscr{P}^{(\nu)}_{lk}v$ and $\mathscr{P}^{(\nu)}_{lk}w$.
    Proposition \ref{prop:Pml_property} indicates that
    the scalar product of any two functions in $V_k^{(\nu)}$
    is invariant after operating on them with $\mathscr{P}^{(\nu)}_{lk}$.
    Indeed, we have for any $v,w\in V_k^{(\nu)}$ that
    \[ (\mathscr{P}^{(\nu)}_{lk}v, \mathscr{P}^{(\nu)}_{lk}w)
    = (\mathscr{P}^{(\nu)*}_{lk}\mathscr{P}^{(\nu)}_{lk}v, w)
    = (\mathscr{P}^{(\nu)}_{kl}\mathscr{P}^{(\nu)}_{lk}v, w)
    = (\mathscr{P}^{(\nu)}_{kk}v, w),\]
    which together with  Corollary \ref{crl:characterize} leads to
    \[ (\mathscr{P}^{(\nu)}_{lk}v, \mathscr{P}^{(\nu)}_{lk}w) = (v, w).\]
    Thus $\mathscr{P}^{(\nu)}_{lk}v$ and $\mathscr{P}^{(\nu)}_{lk}w$
    are orthogonal when $v$ and $w$ are.

    Since $L$ is Hermitian, we see that for the $d_{\nu}$ subproblems
    (\ref{eq:subproblem0}),
    eigenvalues of the $l$-th subproblem are the same as those of the $k$-th
    one, and eigenfunctions of the $l$-th subproblem
    can be chosen as $\{\mathscr{P}^{(\nu)}_{lk}v\}$, where $\{v\}$ are
    eigenfunctions of the $k$-th subproblem and $k$ is any chosen number
    in $\{1,2,\ldots,d_{\nu}\}$.

    Therefore, the original eigenvalue problem (\ref{eq:ori-problem})
    is decomposed into $\sum_{\nu=1}^{n_c} d_{\nu}$ subproblems, and
    for any $\nu\in\{1,2,\ldots,n_c\}$ the corresponding $d_{\nu}$
    subproblems can be given as (\ref{eq:subproblem}).
    This completes the proof.
\qquad\end{proof}

The third equation of the $d_{\nu}$ subproblems in (\ref{eq:subproblem})
are
\[ u_k^{(\nu)} = \mathscr{P}^{(\nu)}_{kk}u_k^{(\nu)}, \]
\[ u_l^{(\nu)} = \mathscr{P}^{(\nu)}_{lk}u_k^{(\nu)}
\quad\forall~l=1,2,\ldots,d_{\nu},~l\ne k. \]
We see from Proposition \ref{prop:basis} that
$\{u_l^{(\nu)}:l=1,2,\ldots,d_{\nu}\}$ form a basis for
$\Gamma^{(\nu)}$. Namely, for any $l\in\{1,2,\ldots,d_{\nu}\}$
\[ P_Ru_l^{(\nu)} = \sum_{m=1}^{d_{\nu}}
u_{m}^{(\nu)}~\Gamma^{(\nu)}(R)_{ml}\quad\forall R\in G, \]
i.e.,
\[ u_l^{(\nu)}(Rx) = \sum_{m=1}^{d_{\nu}}
{\Gamma^{(\nu)}(R)}^*_{lm} u_{m}^{(\nu)}(x) \quad\forall R\in G. \]
\begin{corollary}
    \label{crl:subproblem1}
    Under the same condition as in Theorem \ref{thm:subproblem},
    eigenvalue problem (\ref{eq:ori-problem}) can be decomposed into
    $\sum_{\nu=1}^{n_c}d_{\nu}$ subproblems.
    For any $\nu\in\{1,2,\ldots,n_c\}$, the corresponding $d_{\nu}$
    subproblems can be given as follows
    \begin{equation}
        \label{eq:subproblem1}
        \qquad\left\{
        \begin{array}{rcll}
            Lu_l^{(\nu)} & = &\lambda^{(\nu)} u_l^{(\nu)}
            &~~\mbox{in}~\Omega,\\
            u_l^{(\nu)} & = &0 &~~\mbox{on}~\partial\Omega,
            \qquad\qquad l=1,2,\ldots,d_{\nu}.\\
            u_l^{(\nu)}(Rx) &=& {\displaystyle\sum_{m=1}^{d_{\nu}} }
            {\Gamma^{(\nu)}(R)}^*_{lm} u_{m}^{(\nu)}(x)
            &~~\mbox{in}~\Omega,~\forall R\in G,
        \end{array}
        \right.
    \end{equation}
\end{corollary}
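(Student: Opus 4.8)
The plan is to establish that the corollary is merely a reformulation of Theorem \ref{thm:subproblem}: the two subproblem systems (\ref{eq:subproblem}) and (\ref{eq:subproblem1}) share the same differential equation $Lu_l^{(\nu)}=\lambda^{(\nu)}u_l^{(\nu)}$ and the same boundary condition, so everything reduces to showing that the third (symmetry-characterizing) equation of (\ref{eq:subproblem1}) is equivalent to the third equation of (\ref{eq:subproblem}). Once this equivalence between the two characteristic conditions is in hand, the decomposition into $\sum_{\nu=1}^{n_c}d_{\nu}$ subproblems is inherited directly from Theorem \ref{thm:subproblem}.

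First I would treat the forward direction, which is essentially carried out in the discussion preceding the corollary. Suppose $\{u_l^{(\nu)}\}$ solves (\ref{eq:subproblem}), so that $u_k^{(\nu)}=\mathscr{P}^{(\nu)}_{kk}u_k^{(\nu)}$ (take $l=k$) and $u_l^{(\nu)}=\mathscr{P}^{(\nu)}_{lk}u_k^{(\nu)}$ for $l\ne k$. Because $u_k^{(\nu)}$ is a nonzero eigenfunction with $\mathscr{P}^{(\nu)}_{kk}u_k^{(\nu)}=u_k^{(\nu)}\ne 0$, Proposition \ref{prop:basis} applies with $v=u_k^{(\nu)}$ and shows that $\{u_l^{(\nu)}\}$ form a basis for $\Gamma^{(\nu)}$, i.e. they satisfy the transformation law (\ref{eq:basis}). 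Evaluating (\ref{eq:basis}) at the point $Rx$ using $P_Rf(Rx)=f(x)$ and inverting with the unitarity identity $\sum_m \Gamma^{(\nu)}(R)_{nm}\,{\Gamma^{(\nu)}(R)}^*_{lm}=\delta_{nl}$ then yields the pointwise form $u_l^{(\nu)}(Rx)=\sum_m {\Gamma^{(\nu)}(R)}^*_{lm}u_m^{(\nu)}(x)$, which is exactly the third line of (\ref{eq:subproblem1}).

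Next I would establish the converse. Assume the nonzero functions $\{u_l^{(\nu)}\}$ satisfy the transformation condition of (\ref{eq:subproblem1}). Reversing the same $P_R$ manipulation shows this is precisely (\ref{eq:basis}), so $\{u_l^{(\nu)}\}$ form a basis for $\Gamma^{(\nu)}$ in the sense of the Definition. Then (\ref{eq:Pml_special}) gives $\mathscr{P}^{(\nu)}_{ll}u_l^{(\nu)}=u_l^{(\nu)}$, and (\ref{eq:Pml_basic}) applied with $\nu'=\nu$ and $l'=k$ gives $\mathscr{P}^{(\nu)}_{lk}u_k^{(\nu)}=u_l^{(\nu)}$, in particular $u_k^{(\nu)}=\mathscr{P}^{(\nu)}_{kk}u_k^{(\nu)}$. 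These are exactly the characteristic conditions of (\ref{eq:subproblem}). Hence the two systems single out the same eigenfunctions, and invoking Theorem \ref{thm:subproblem} completes the proof.

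The only genuinely delicate point is the bookkeeping in the step that passes between the operator identity $P_Ru_l^{(\nu)}=\sum_m u_m^{(\nu)}\Gamma^{(\nu)}(R)_{ml}$ and its pointwise counterpart $u_l^{(\nu)}(Rx)=\sum_m {\Gamma^{(\nu)}(R)}^*_{lm}u_m^{(\nu)}(x)$, where the defining relation $P_Rf(Rx)=f(x)$ must be combined correctly with the unitarity of $\Gamma^{(\nu)}$ to invert the index transformation. Everything else is a direct appeal to Proposition \ref{prop:basis}, Corollary \ref{crl:characterize}, and the operator relations (\ref{eq:Pml_basic}) and (\ref{eq:Pml_special}), so no new estimate is required beyond what Theorem \ref{thm:subproblem} already provides.
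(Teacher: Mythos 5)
Your proposal is correct and follows essentially the same route as the paper: the paper likewise derives the third line of (\ref{eq:subproblem1}) by invoking Proposition \ref{prop:basis} to show $\{u_l^{(\nu)}\}$ form a basis for $\Gamma^{(\nu)}$ and then converting $P_Ru_l^{(\nu)}=\sum_m u_m^{(\nu)}\Gamma^{(\nu)}(R)_{ml}$ to pointwise form via $P_Rf(Rx)=f(x)$ and unitarity. Your explicit verification of the converse direction (recovering the $\mathscr{P}^{(\nu)}_{lk}$ conditions from the transformation law via (\ref{eq:Pml_basic})) is a small but welcome addition that the paper leaves implicit.
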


The third equations in (\ref{eq:subproblem}) and (\ref{eq:subproblem1})
describe symmetry properties of eigenfunctions over domain $\Omega$.
The original eigenvalue problem can be decomposed into subproblems
just because eigenfunctions of subproblems satisfy distinct equations.
In the following text, we call these equations as
symmetry characteristics.

Denote by $\Omega_0$ the smallest subdomain which produces $\Omega$
by applying all symmetry operations $\{R\in G\}$, namely,
${\bar \Omega}=\cup_{R\in G}\overline{R\Omega_0}$, and
${R_1\Omega_0}\cap{R_2\Omega_0}=\varnothing$ for any $R_1,R_2\in G$
satisfying $R_1\ne R_2$.
We call $\Omega_0$ the irreducible subdomain and
the associated volume is $g$ times smaller than that of $\Omega$.
The symmetry characteristic equation in (\ref{eq:subproblem1}) tells that
for any $l\in\{1,2,\ldots,d_{\nu}\}$,
$u_l^{(\nu)}$ over $\Omega$ is determined by the values of functions
$\{u_1^{(\nu)},\ldots,u_{d_{\nu}}^{(\nu)}\}$ over $\Omega_0$.
So each subproblem can be solved over $\Omega_0$.

\begin{remark}
    \label{rmk:characteristic}
    A decomposition formulation has been shown in \cite{bossavit93} for
    boundary value problems with spatial symmetries.
    Each decomposed problem is characterized by a ``boundary condition'' on
    $\Sigma_g$ \footnote{In \cite{bossavit93}, $\Sigma_g$ is the ``internal''
    boundary $\partial\Omega_0\setminus\partial\Omega$ of $\Omega_0$, and
    irreducible subdomain $\Omega_0$ is called symmetry cell.},
    which is in fact a restriction of the symmetry characteristic on
    boundary $\Sigma_g$.
    Indeed, symmetry characteristics over $\Omega$ should not be replaced by
    the restriction on the internal boundary.
    In some cases, it is true that boundary conditions such as
    Dirichlet or Neumann type can be deduced, while in the deduction
    of Neumann boundary conditions one has to use the symmetry characteristic
    near the internal boundary, not only on the boundary.
    In some other cases, symmetry characteristics may not produce proper
    boundary conditions.
\end{remark}

\subsection{An example}
We take the Laplacian in square $(-1,1)^2$ as an example to illustrate
the subproblem formulation in Corollary \ref{crl:subproblem1}.
Namely, we consider the decomposition of the following eigenvalue problem
\begin{equation}
    \label{eq:eg}
    \left\{
    \begin{array}{rcll}
        -\Delta u & = &\lambda u &\quad\mbox{in}~\Omega=(-1,1)^2,\\
        u & = &0 &\quad\mbox{on}~\partial\Omega.
    \end{array}
    \right.
\end{equation}
Note that $G=\{E,\sigma_x,\sigma_y,I\}$ is a symmetry group associated with
(\ref{eq:eg}), where $E$ represents the identity operation,
$\sigma_x$ a reflection about $x$-axis,
$\sigma_y$ a reflection about $y$-axis, and $I$ the inversion operation.
We see that $G$ is an Abelian group of order 4, and has 4 one-dimensional
irreducible representations as shown in Table \ref{table:eg_IR}.

\begin{table}[htbp]
    \caption{Representation matrices of example group $G$.}
    \begin{center}\footnotesize
    \begin{tabular}{|c|cccc|}
        \hline
        $G$ & $R_1=E$ & $R_2=\sigma_x$ & $R_3=\sigma_y$ & $R_4=I$ \\
        \hline
        $\Gamma^{(1)}$ & 1 & 1 & 1 & 1 \\
        $\Gamma^{(2)}$ & 1 & 1 &-1 &-1 \\
        $\Gamma^{(3)}$ & 1 &-1 &-1 & 1 \\
        $\Gamma^{(4)}$ & 1 &-1 & 1 &-1 \\ \hline
    \end{tabular}
    \end{center}
    \label{table:eg_IR}
\end{table}

According to Theorem \ref{thm:subproblem} and Corollary \ref{crl:subproblem1},
eigenvalue problem (\ref{eq:eg}) can be decomposed into 4 subproblems
(due to $\sum_{\nu=1}^{n_c}d_{\nu}=4$).
And the symmetry characteristic conditions, the third equation in
(\ref{eq:subproblem1}), for the 4 subproblems are
\begin{eqnarray}
    \{u^{(1)}(R_1x),u^{(1)}(R_2x),u^{(1)}(R_3x),u^{(1)}(R_4x)\}
    &=& \{1, 1, 1, 1\} ~u^{(1)}(x), \label{eq:1} \\
    \{u^{(2)}(R_1x),u^{(2)}(R_2x),u^{(2)}(R_3x),u^{(2)}(R_4x)\}
    &=& \{1, 1,-1,-1\} ~u^{(2)}(x), \label{eq:2} \\
    \{u^{(3)}(R_1x),u^{(3)}(R_2x),u^{(3)}(R_3x),u^{(3)}(R_4x)\}
    &=& \{1,-1,-1, 1\} ~u^{(3)}(x), \label{eq:3} \\
    \{u^{(4)}(R_1x),u^{(4)}(R_2x),u^{(4)}(R_3x),u^{(4)}(R_4x)\}
    &=& \{1,-1, 1,-1\} ~u^{(4)}(x), \label{eq:4}
\end{eqnarray}
where $x\in\Omega$ is an arbitrary point and
subscripts of $\{u_1^{(\nu)}:\nu=1,2,3,4\}$ are omitted.

In Figure~\ref{fig:eg}, we illustrate four eigenfunctions of (\ref{eq:eg})
belonging to different subproblems.
We see that $u_2$ and $u_3$ are degenerate eigenfunctions
corresponding to $\lambda=\frac{5}{4}\pi^2$ with double degeneracy.
In other words, a doubly-degenerate eigenvalue of the original problem
becomes nondegenerate for subproblems.
This implies a relation between symmetry and degeneracy
\cite{kuttler84,neuberger06,trefethen06}.
Moreover, the first subproblem does not have this eigenvalue, which shows
that the decomposition approach has improved the spectral separation.

\begin{figure}[htpb]
    \begin{center}
        \includegraphics[width=0.7\textwidth]{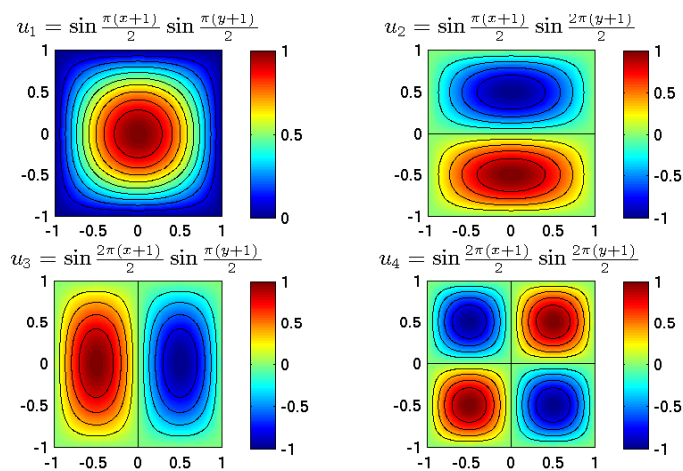}
    \end{center}
    \caption{Four eigenfunctions of problem (\ref{eq:eg}):
    $u_1$ keeps invariant under $\{E,\sigma_x,\sigma_y,I\}$
    and satisfies equation (\ref{eq:1}),
    and $u_2$, $u_3$ and $u_4$ satisfy (\ref{eq:4}), (\ref{eq:2}) and
    (\ref{eq:3}), respectively.}
    \label{fig:eg}
\end{figure}

Under the assumption that all symmetries of the eigenvalue problem
are included in group $G$ and no accidental degeneracy occurs,
the eigenvalue degeneracy is determined by the dimensionalities
of irreducible representations of $G$ \cite{tinkham64,wigner59}.
For example, in cubic crystals
\footnote{Cubic crystals are crystals where the unit cell is a cube.
All irreducible representations of the associated symmetry group are
one-, two-, or three-dimensional.}
all eigenstates have degeneracy 1, 2, or 3 \cite{martin04}.
According to Theorem \ref{thm:subproblem} or Corollary \ref{crl:subproblem1},
eigenvalues of each subproblem should be nondegenerate.
In practice, we usually use part of symmetry operations.
Thus subproblems will probably still have degenerate eigenvalues.
However, it is possible to improve the spectral separation,
especially when we exploit as many symmetries as possible.
This would benefit the convergence of iterative diagonalization.

Formulation (\ref{eq:subproblem1}) makes a straightforward implementation
for grid-based discretizations.
We shall discuss the way to solve the subproblems in the next section.

\section{Discretization}\label{sec:discretization}
In this section, we study the discretized eigenvalue problems for
subproblems (\ref{eq:subproblem}) and (\ref{eq:subproblem1}).
First we deduce our discretized systems when grid-based discretizations
are employed. Then we provide a construction procedure for
the symmetry-adapted bases, based on which we illustrate the relation of
our discretized systems to those formed by symmetry-adapted bases.

Note that the $d_{\nu}$ subproblems associated with different $\nu$ values
are independent and have the same formulation.
So we take one $\nu\in\{1,2,\ldots,n_c\}$ and
discuss the corresponding $d_{\nu}$ subproblems.

\subsection{Our discretized system}\label{sec:discretized_sys}
Suppose $\Omega$ is discretized by a symmetrical grid with respect to group
$G$, and $N$ is the number of degrees of freedom.
For simplicity we assume that no degree of freedom lies on symmetry elements
\footnote{Symmetry element of operation $R$ is a point of reference
about which $R$ is carried out, such as a point to do inversion,
a rotation axis, or a reflection plane.
Symmetry element is invariant under the associated symmetry operation.}.

We determine a smallest set of degrees of freedom that could produce
all $N$ ones by applying symmetry operations $\{R\in G\}$.
It is clear that the number of degrees of freedom in this smallest set
satisfies $N_0=\frac{1}{g}N$. We denote the set as
\[ \{x_j:j=1,2,\ldots,N_0\}, \]
then all degrees of freedom can be given by
\[ \{R(j):j=1,2,\ldots,N_0,~R\in G\}, \]
where $R(j)\equiv Rx_j~(j=1,2,\ldots,N_0)$.

The symmetry characteristic equation in (\ref{eq:subproblem1}) tells that
for any $l\in\{1,2,\ldots,d_{\nu}\}$, the values of $u_l^{(\nu)}$ on
all degrees of freedom $\{R(j):j=1,2,\ldots,N_0,~R\in G\}$ are determined by
the values of $\{u_1^{(\nu)},\ldots,u_{d_{\nu}}^{(\nu)}\}$ on
$\{j:j=1,2,\ldots,N_0\}$.
Thus, the size of discretized eigenvalue problem for (\ref{eq:subproblem1})
is $d_{\nu}N_0$.

If the given irreducible representation $\Gamma^{(\nu)}$ is one-dimensional,
then (\ref{eq:subproblem1}) gives
\begin{equation}
    \label{eq:subproblem11}
    \left\{
    \begin{array}{rcll}
        Lu^{(\nu)} & = &\lambda^{(\nu)} u^{(\nu)}
        &\quad\mbox{in}~\Omega,\\
        u^{(\nu)} & = &0 &\quad\mbox{on}~\partial\Omega, \\
        u^{(\nu)}(Rx) &=&
        {\Gamma^{(\nu)}(R)}^*~u^{(\nu)}(x)
        &\quad\mbox{in}~\Omega,~\forall R\in G,
    \end{array}
    \right.
\end{equation}
where we omit subscripts of ${\Gamma^{(\nu)}(R)}^*_{11}$ and $u_1^{(\nu)}$.

Suppose the discretized system for eigenvalue problem (\ref{eq:subproblem11})
is
\[ \sum_{j=1}^{N_0}\sum_{R\in G} a_{\scriptscriptstyle{i,R(j)}}
u_{\scriptscriptstyle{R(j)}} = \lambda \sum_{j=1}^{N_0}\sum_{R\in G}
b_{\scriptscriptstyle{i,R(j)}} u_{\scriptscriptstyle{R(j)}},
\quad i=1,2,\ldots,N_0,\]
where $u_{\scriptscriptstyle{R(j)}}$ is the unknown associated with $Rx_j$
and $\{a_{\scriptscriptstyle{i,R(j)}}, b_{\scriptscriptstyle{i,R(j)}}\}$
represent the discretization coefficients.
For instance, in finite element discretizations,
$a_{\scriptscriptstyle{i,R(j)}}$ and $b_{\scriptscriptstyle{i,R(j)}}$
are entries of the stiffness and mass matrices, respectively.
Note that for any $i\in\{1,2,\ldots,N_0\}$,
although the discretization equation seems to involve all $N$ degrees of
freedom $\{R(j):j=1,2,\ldots,N_0,~R\in G\}$,
in fact only part of coefficients
$\{a_{\scriptscriptstyle{i,R(j)}},~b_{\scriptscriptstyle{i,R(j)}}:
j=1,2,\ldots,N_0,~R\in G\}$ are non-zero.
An extreme example is that in finite difference discretizations
$b_{\scriptscriptstyle{i,R(j)}}=\delta_{\scriptscriptstyle{i,R(j)}}~
(j=1,2,\ldots,N_0,~R\in G)$ for any $i\in\{1,2,\ldots,N_0\}$.

We know from the symmetry characteristic equation that the discretized system
is then reduced to
\[ \sum_{j=1}^{N_0}\sum_{R\in G}
{\Gamma^{(\nu)}(R)}^*~a_{\scriptscriptstyle{i,R(j)}}
~u_{\scriptscriptstyle{j}} = \lambda \sum_{j=1}^{N_0}\sum_{R\in G}
{\Gamma^{(\nu)}(R)}^*~b_{\scriptscriptstyle{i,R(j)}}
~u_{\scriptscriptstyle{j}},\quad i=1,2,\ldots,N_0.\]
Denote the solution vector as
\[ {\bf u} = (u_1,u_2,\ldots,u_{N_0})^{\mathsf T}, \]
we may rewrite the discretized system as a matrix form
\[ A{\bf u} = \lambda B{\bf u}, \]
where
\begin{equation}
    \label{eq:abelAB}
    \begin{array}{rcll}
        A &=& (A_{ij})_{N_0\times N_0},
        &~A_{ij} = {\displaystyle \sum_{R\in G}}
        {\Gamma^{(\nu)}(R)}^*~a_{\scriptscriptstyle{i,R(j)}},\vspace{0.1cm}\\
        B &=& (B_{ij})_{N_0\times N_0},
        &~B_{ij} = {\displaystyle\sum_{R\in G}}
        {\Gamma^{(\nu)}(R)}^*~b_{\scriptscriptstyle{i,R(j)}}.
    \end{array}
\end{equation}

In the case of higher-dimensional irreducible representations,
the $d_{\nu}$ subproblems in (\ref{eq:subproblem1}) are coupled through
symmetry characteristics.
Taking $d_{\nu}=2$ as an example, we assemble subproblems for $u_1^{(\nu)}$
and $u_2^{(\nu)}$ in (\ref{eq:subproblem1}) to solve eigenvalue problem
{\small
\begin{equation}
    \label{eq:subproblem12}
    \quad\left\{
    \begin{array}{rcll}
        \left[\begin{array}{c} Lu_1^{(\nu)} \vspace{0.1cm}\\ Lu_2^{(\nu)}
        \end{array}\right]
        &=& \lambda^{(\nu)}
        \left[\begin{array}{c}  u_1^{(\nu)} \vspace{0.1cm}\\  u_2^{(\nu)}
        \end{array}\right]
        &\mbox{in}~\Omega, \vspace{0.2cm} \\
        \left[\begin{array}{c}  u_1^{(\nu)} \vspace{0.1cm}\\  u_2^{(\nu)}
        \end{array}\right] (Rx) &=&
        \left[\begin{array}{cc}
            {\Gamma^{(\nu)}(R)}^*_{11} & {\Gamma^{(\nu)}(R)}^*_{12}
            \vspace{0.1cm}\\
            {\Gamma^{(\nu)}(R)}^*_{21} & {\Gamma^{(\nu)}(R)}^*_{22} \\
        \end{array}\right]
        \left[\begin{array}{c}  u_1^{(\nu)} \vspace{0.1cm}\\  u_2^{(\nu)}
        \end{array}\right] (x)
        &\mbox{in}~\Omega,~\forall R\in G, \vspace{0.2cm} \\
        \left[\begin{array}{c}  u_1^{(\nu)} \vspace{0.1cm}\\  u_2^{(\nu)}
        \end{array}\right]
        &=& \left[\begin{array}{c}0 \vspace{0.1cm}\\0 \end{array}\right]
        &\mbox{on}~\partial\Omega.
    \end{array}
    \right.
\end{equation}
}

Suppose the discretized system associated with (\ref{eq:subproblem12}) is
\[\left\{
\begin{array}{c}
    {\displaystyle\sum_{j=1}^{N_0}\sum_{R\in G}}
    ~a_{\scriptscriptstyle{i,R(j)}}~u_{\scriptscriptstyle{1,R(j)}} = \lambda
    ~{\displaystyle\sum_{j=1}^{N_0}\sum_{R\in G}}
    ~b_{\scriptscriptstyle{i,R(j)}}~u_{\scriptscriptstyle{1,R(j)}},
    \quad i=1,2,\ldots,N_0,
    \vspace{0.1cm} \\
    {\displaystyle\sum_{j=1}^{N_0}\sum_{R\in G}}
    ~a_{\scriptscriptstyle{i,R(j)}}~u_{\scriptscriptstyle{2,R(j)}} = \lambda
    ~{\displaystyle\sum_{j=1}^{N_0}\sum_{R\in G}}
    ~b_{\scriptscriptstyle{i,R(j)}}~u_{\scriptscriptstyle{2,R(j)}},
    \quad i=1,2,\ldots,N_0,
\end{array}
\right.
\]
where $u_{\scriptscriptstyle{1,R(j)}}$ and $u_{\scriptscriptstyle{2,R(j)}}$
are the unknowns associated with $Rx_j$. Denote the solution vector as
\[ {\bf v} =
(u_{11},u_{12},\ldots,u_{1N_0},~u_{21},u_{22},\ldots,u_{2N_0})^{\mathsf T} \]
and rewrite the discretized system as a matrix form
\[ A{\bf v} = \lambda B{\bf v}. \]
We have
\begin{displaymath}
    A = \left[\begin{array}{cc}
        A_{[11]} & A_{[12]} \\
        A_{[21]} & A_{[22]}
    \end{array} \right], ~
    B = \left[ \begin{array}{cc}
        B_{[11]} & B_{[12]} \\
        B_{[21]} & B_{[22]}
    \end{array} \right],
\end{displaymath}
where $A_{[ml]} = (A_{[ml]ij})_{N_0\times N_0}~(m,l=1,2)$ with
\begin{equation}
    \label{eq:nabelAB}
    \begin{array}{rcll}
        A_{[11]ij} &=& {\displaystyle\sum_{R\in G}}
        {\Gamma^{(\nu)}(R)}^*_{11}~a_{\scriptscriptstyle{i,R(j)}},
        &~A_{[12]ij} = {\displaystyle\sum_{R\in G}}
        {\Gamma^{(\nu)}(R)}^*_{12}~a_{\scriptscriptstyle{i,R(j)}},
        \vspace{0.1cm}\\
        A_{[21]ij} &=& {\displaystyle\sum_{R\in G}}
        {\Gamma^{(\nu)}(R)}^*_{21}~a_{\scriptscriptstyle{i,R(j)}},
        &~A_{[22]ij} = {\displaystyle\sum_{R\in G}}
        {\Gamma^{(\nu)}(R)}^*_{22}~a_{\scriptscriptstyle{i,R(j)}}.
    \end{array}
\end{equation}
Entries of $B$ are in the same form as those of $A$ and can be
obtained by substituting $a_{\scriptscriptstyle{i,R(j)}}$ with
$b_{\scriptscriptstyle{i,R(j)}}$.

If symmetry group $G$ is Abelian, each irreducible representation is
one-dimensional and all discretized subproblems are independent.
Otherwise, there exist $\Gamma^{(\nu)}$ with $d_{\nu}>1$ and
the corresponding $d_{\nu}$ discretized subproblems are coupled through
symmetry characteristics.
Thus, no matter $G$ is Abelian or not, we shall solve $n_c$ decoupled
eigenvalue problems,
where $n_c$ is the number of irreducible representations.
And the size of discretized system for the $\nu$-th problem is $d_{\nu}N_0$.

\subsection{Symmetry-adapted bases}
In Section~\ref{sec:fe_discretized}, we shall illustrate the relation between
our approach and the approach that constructs symmetry-adapted bases.
For this purpose, in the current subsection,
we tell how to construct the symmetry-adapted bases,
which is the most critical step in the latter approach.

Consider the weak form of (\ref{eq:ori-problem}):
find $(\lambda, u)\in\mathbb{R}\times V$ such that
\[ a(u, v) = \lambda(u, v) \quad\forall v\in V, \]
where $a(\cdot,\cdot)$ is the associated bilinear form over $V\times V$.

Note that the discussion in this part is not restricted to grid-based
discretizations, but we still use notation $N$ and $N_0$ for brevity.
Suppose that we start from $N$ basis functions $\{\psi\}$ of some type,
which satisfy that for any $R\in G$, $P_R\psi$ is one of the basis functions
when $\psi$ is, i.e.,
the $N$ basis functions are chosen with respect to symmetry group $G$.
For simplicity, like the assumption for grid-based discretizations,
we assume that the $g$ basis functions $\{P_{R}\psi: R\in G\}$ are linearly
independent for any basis function $\psi$.
We see that the number of basis functions in the set
which could produce all $N$ ones by applying $\{R\in G\}$
is $g$ times smaller than $N$. We denote this set by
\[ \{\psi_j:j=1,2,\ldots,N_0\}, \]
then all basis functions are given as
\[ \{ P_R\psi_j:j=1,2,\ldots,N_0,~R\in G\}. \]

For the given $\nu$, we fix some $k\in\{1,2,\ldots,d_{\nu}\}$ and
generate symmetry-adapted bases for the $k$-th subproblem in
(\ref{eq:subproblem}).
This is achieved by applying projection operator $\mathscr{P}^{(\nu)}_{kk}$
on all the basis functions $\{ P_R\psi_j:j=1,2,\ldots,N_0,~R\in G\}$.
Suppose that we obtain $N'$ linearly independent symmetry-adapted bases from
this process and we denote them as $\{\Psi_j:j=1,2,\ldots,N'\}$.
Then for any $l\in\{1,2,\ldots,d_{\nu}\}$, symmetry-adapted bases for
the $l$-th subproblem can be given as
$\{\mathscr{P}^{(\nu)}_{lk}\Psi_j:j=1,2,\ldots,N'\}$.

Consider the $d_{\nu}$ discretized systems under the generated bases.
Matrix elements of the $l$-th discretized system are
\[a(\mathscr{P}^{(\nu)}_{lk}\Psi_j, \mathscr{P}^{(\nu)}_{lk}\Psi_i), ~
(\mathscr{P}^{(\nu)}_{lk}\Psi_j, \mathscr{P}^{(\nu)}_{lk}\Psi_i),\quad
i,j=1,2,\ldots,N'. \]
For each $j\in\{1,2,\ldots,N'\}$, according to Proposition \ref{prop:basis},
$\{\mathscr{P}^{(\nu)}_{lk}\Psi_j:l=1,2,\ldots,d_{\nu}\}$ form a basis for
$\Gamma^{(\nu)}$.
We see from (\ref{eq:two_orth}) and Proposition \ref{prop:Lv} that
all the $d_{\nu}$ discretized systems are the same.
So we only need to solve the discretized system
corresponding to the $k$-th subproblem:
\[ \sum_{j=1}^{N'} a(\Psi_j, \Psi_i)~\alpha_j = \lambda^{(\nu)}
\sum_{j=1}^{N'} (\Psi_j, \Psi_i)~\alpha_j,\quad i = 1,2,\ldots,N', \]
where $\{\alpha_j\}$ are the unknowns.
After calculating $\{\alpha_j\}$, the approximated eigenfunctions
for the $l$-th subproblem can be achieved by
\[ u_l^{(\nu)} = \sum_{j=1}^{N'}\alpha_j\mathscr{P}^{(\nu)}_{lk}\Psi_j,
\quad l=1,2,\ldots,d_{\nu}.\]

Next we show how many symmetry-adapted bases would be constructed for
the $\nu$-$k$ symmetry, i.e., the number $N'$ of linearly independent
symmetry-adapted functions in
\[\mathscr{P}^{(\nu)}_{kk}\{P_R\psi_j:j=1,2,\ldots,N_0,~R\in G\}. \]
And then we give the specific way to obtain these functions.

\begin{theorem}
    \label{thm:basis_real}
    Suppose the original basis functions
    $\{P_R\psi_j:j=1,2,\ldots,N_0,~R\in G\}$ satisfy that
    for each $j\in\{1,2,\ldots,N_0\}$ the $g$ functions in
    $\{P_{R}\psi_j: R\in G\}$ are linearly independent.
    Then for any given $\nu\in\{1,2,\ldots,n_c\}$ and
    $k\in\{1,2,\ldots,d_{\nu}\}$, there are $d_{\nu}N_0$
    symmetry-adapted bases for the $\nu$-$k$ symmetry.
\end{theorem}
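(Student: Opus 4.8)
The plan is to convert the counting of linearly independent symmetry-adapted functions into a rank computation for the projection operator $\mathscr{P}^{(\nu)}_{kk}$. First I would set $V_N=\mathrm{span}\{P_R\psi_j:j=1,\ldots,N_0,\ R\in G\}$. Since the $N=gN_0$ functions $\{P_R\psi_j\}$ are linearly independent (the per-orbit hypothesis together with the orbits of distinct representatives being disjoint), we have $\dim V_N=gN_0$, and the quantity $N'$ we want is precisely the dimension of the image $\mathscr{P}^{(\nu)}_{kk}(V_N)$. The subspace $V_N$ is invariant under every $P_R$, because $P_R(P_S\psi_j)=P_{RS}\psi_j\in V_N$, hence it is also invariant under the linear combination $\mathscr{P}^{(\nu)}_{kk}$, so restricting makes sense.

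Next, because Proposition \ref{prop:Pml_property}(b) gives $\mathscr{P}^{(\nu)}_{kk}\mathscr{P}^{(\nu)}_{kk}=\mathscr{P}^{(\nu)}_{kk}$, the operator $\mathscr{P}^{(\nu)}_{kk}$ is idempotent, so its rank on $V_N$ equals its trace. Using the definition (\ref{eq:Pml_def}),
\[ N' = \mathrm{rank}\big(\mathscr{P}^{(\nu)}_{kk}\big|_{V_N}\big) = \mathrm{tr}\big(\mathscr{P}^{(\nu)}_{kk}\big|_{V_N}\big) = \frac{d_{\nu}}{g}\sum_{R\in G}{\Gamma^{(\nu)}(R)}^*_{kk}\,\mathrm{tr}\big(P_R\big|_{V_N}\big). \]
I would then evaluate $\mathrm{tr}(P_R|_{V_N})$ in the basis $\{P_S\psi_j\}$: since $P_R$ sends $P_S\psi_j$ to $P_{RS}\psi_j$, it permutes the basis and fixes a basis vector only when $RS=S$, i.e.\ when $R=E$; thus $\mathrm{tr}(P_R|_{V_N})=gN_0\,\delta_{RE}$, which is just the character of $N_0$ copies of the regular representation. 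Only the $R=E$ term survives, and $\Gamma^{(\nu)}(E)_{kk}=1$, so $N'=\frac{d_{\nu}}{g}\cdot gN_0=d_{\nu}N_0$.

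The main obstacle is the bookkeeping underlying $\dim V_N=gN_0$: I must be sure that the orbits of distinct representatives $\psi_j$ are disjoint and that the full collection of $N$ functions is linearly independent, so that each $P_R$ with $R\ne E$ is represented by a fixed-point-free permutation matrix and the character evaluation above is valid. All the representation-theoretic content—namely that the regular representation contains $\Gamma^{(\nu)}$ with multiplicity $d_{\nu}$—is then encoded in the single orthogonality identity carried by (\ref{eq:Pml_def}), so once the independence is settled the computation is immediate. As a by-product, the relation $\mathscr{P}^{(\nu)}_{kk}P_R\psi_j=\sum_{p=1}^{d_{\nu}}\Gamma^{(\nu)}(R)_{kp}\,\mathscr{P}^{(\nu)}_{kp}\psi_j$ shows that, for each fixed $j$, the $d_{\nu}$ functions $\{\mathscr{P}^{(\nu)}_{kp}\psi_j:p=1,\ldots,d_{\nu}\}$ already span the image of that orbit; this both recovers the per-orbit contribution of $d_{\nu}$ and exhibits the explicit functions that the subsequent construction will use.
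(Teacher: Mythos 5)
Your proposal is correct, but it takes a genuinely different route from the paper. The paper works orbit by orbit: for each fixed $j$ it writes $\mathscr{P}^{(\nu)}_{kk}P_R\psi_j$ as a combination of $\{P_S\psi_j:S\in G\}$ and reduces the count to the rank of the $g\times g$ matrix $C_{mn}={\Gamma^{(\nu)}(R_mR_n)}^*_{kk}$, which it factors as $C=C_1C_2$ with $C_1\in\mathbb{C}^{g\times d_\nu}$, $C_2\in\mathbb{C}^{d_\nu\times g}$ both of rank $d_\nu$ by the great orthogonality theorem, giving $\mathrm{rank}(C)=d_\nu$ per orbit. You instead argue globally: idempotence of $\mathscr{P}^{(\nu)}_{kk}$ (from Proposition \ref{prop:Pml_property}(b)) turns the rank into a trace, and since each $P_R$ with $R\neq E$ acts on the basis $\{P_S\psi_j\}$ as a fixed-point-free permutation, only the identity term survives and $N'=\frac{d_\nu}{g}\cdot gN_0=d_\nu N_0$. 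This is the classical ``multiplicity of $\Gamma^{(\nu)}$ in $N_0$ copies of the regular representation'' argument; it is shorter and more conceptual, while the paper's explicit rank factorization is what directly feeds Remark \ref{rmk:basis_real} (the recipe for selecting $d_\nu$ operations $R_n$ whose images give the actual basis). Your closing identity $\mathscr{P}^{(\nu)}_{kk}P_R\psi_j=\sum_{p}\Gamma^{(\nu)}(R)_{kp}\,\mathscr{P}^{(\nu)}_{kp}\psi_j$ recovers an explicit per-orbit spanning set and so compensates for that loss. Two points of bookkeeping, both of which you flag: (i) both arguments need the \emph{full} collection of $N=gN_0$ functions to be linearly independent, not merely each orbit separately — the theorem's hypothesis literally states only the latter, and the paper tacitly uses the former when it asserts that functions with different $j$ values are independent, so you are on the same footing; (ii) your identification of $N'$ with $\mathrm{rank}\bigl(\mathscr{P}^{(\nu)}_{kk}\big|_{V_N}\bigr)$ is legitimate because the $\{P_R\psi_j\}$ span $V_N$, so the image of the spanning set spans the image of the operator.
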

\begin{proof}
    We need to prove that there are exactly $d_{\nu}N_0$ linearly independent
    symmetry-adapted functions in
    $\{ \mathscr{P}^{(\nu)}_{kk}P_R\psi_j:j=1,2,\ldots,N_0,~R\in G\}$.

    For any $R\in G$ and $j\in\{1,2,\ldots,N_0\}$, since
    \begin{equation}
        \label{eq:Pkk_PR}
        \mathscr{P}^{(\nu)}_{kk}P_R\psi_j = \frac{d_{\nu}}{g}
        \sum_{R'\in G} {\Gamma^{(\nu)}(R')}^*_{kk} P_{R'}P_R\psi_j
        = \frac{d_{\nu}}{g} \sum_{S\in G}
        {\Gamma^{(\nu)}(SR^{-1})}^*_{kk} P_{S}\psi_j,
    \end{equation}
    we see that $\mathscr{P}^{(\nu)}_{kk}P_R\psi_j$ is a linear combination of
    functions $\{P_{S}\psi_j: S\in G\}$ and the coefficient of $P_{S}\psi_j$
    is $\frac{d_{\nu}}{g}{\Gamma^{(\nu)}(SR^{-1})}^*_{kk}$.
    Obviously, functions in
    $\{\mathscr{P}^{(\nu)}_{kk}P_R\psi_j:j=1,2,\ldots,N_0,~R\in G\}$
    with different $j$ values are linearly independent.
    So we only need to determine the number of symmetry-adapted bases in
    $\{ \mathscr{P}^{(\nu)}_{kk}P_R\psi_j:R\in G\}$ for any given
    $j\in\{1,2,\ldots,N_0\}$.

    Since $\{P_{S}\psi_j: S\in G\}$ are linearly independent
    and $R^{-1}$ runs over all elements of group $G$ when $R$ does,
    (\ref{eq:Pkk_PR}) tells that the number of linearly independent
    functions in $\{ \mathscr{P}^{(\nu)}_{kk}P_R\psi_j: R\in G\}$
    equals to the rank of matrix $C = (C_{mn})_{g\times g}$,
    where $C_{mn} = {\Gamma^{(\nu)}(R_mR_n)}^*_{kk}.$

    We observe that $C$ can be written as
    \begin{displaymath}
        C =
        \left[\begin{array}{ccc}
            {\Gamma^{(\nu)}(R_1)}^*_{k1} & \ldots
            & {\Gamma^{(\nu)}(R_1)}^*_{kd_{\nu}} \\
            {\Gamma^{(\nu)}(R_2)}^*_{k1} & \ldots
            & {\Gamma^{(\nu)}(R_2)}^*_{kd_{\nu}} \\
            \vdots & & \vdots \\
            {\Gamma^{(\nu)}(R_g)}^*_{k1} & \ldots
            & {\Gamma^{(\nu)}(R_g)}^*_{kd_{\nu}}
        \end{array} \right]
        \left[ \begin{array}{ccc}
            {\Gamma^{(\nu)}(R_1)}^*_{1k} & \ldots
            & {\Gamma^{(\nu)}(R_g)}^*_{1k} \\
            {\Gamma^{(\nu)}(R_1)}^*_{2k} & \ldots
            & {\Gamma^{(\nu)}(R_g)}^*_{2k} \\
            \vdots & & \vdots \\
            {\Gamma^{(\nu)}(R_1)}^*_{d_{\nu}k} & \ldots
            & {\Gamma^{(\nu)}(R_g)}^*_{d_{\nu}k}
        \end{array} \right]
        \equiv C_1C_2,
    \end{displaymath}
    where $C_1$ and $C_2$ are $g\times d_{\nu}$ and $d_{\nu}\times g$
    matrices, respectively.
    We obtain from the great orthogonality theorem (\ref{eq:great_orth}) that
    columns of $C_1$ are orthogonal, and so are rows of $C_2$, i.e.,
    \[ \mbox{rank}(C_1) = \mbox{rank}(C_2) = d_{\nu}. \]
    Thus
    \[ \mbox{rank}(C) = \mbox{rank}(C_1C_2) = d_{\nu}, \]
    and we completed the proof.
\qquad\end{proof}

\begin{remark}
    \label{rmk:basis_real}
    For the given $\nu$ and $k$, Theorem \ref{thm:basis_real} indicates that
    there are $d_{\nu}$ symmetry-adapted bases for each
    $j\in\{1,2,\ldots,N_0\}$.
    It remains a problem how to obtain these $d_{\nu}$ functions.
    We see from (\ref{eq:Pkk_PR}) that, whenever the chosen
    $d_{\nu}$ operations $\{R_n\in G: n=1,2,\ldots,d_{\nu}\}$ satisfy that
    the $k$-th columns of matrices
    $\{\Gamma^{(\nu)}(R_n^{-1}): n=1,2,\ldots,d_{\nu}\}$
    are linearly independent,
    $\{ \mathscr{P}^{(\nu)}_{kk}P_{R_n}\psi_j: n=1,2,\ldots,d_{\nu}\}$
    exactly give the $d_{\nu}$ symmetry-adapted bases.
\end{remark}

\subsection{Relation}\label{sec:fe_discretized}
In this part, taking the finite element discretization as an example,
we investigate the relation between our discretized systems and those formed by
the symmetry-adapted bases.

Consider the finite element discretization and denote the basis function
corresponding to any $j\in\{1,2,\ldots,N_0\}$ as $\varphi_j$.
We see from $P_R\varphi_j(x) = \varphi_j(R^{-1}x)$ that
$P_R\varphi_j$ is the basis function corresponding to $R(j)$, i.e.,
\[ P_R\varphi_j = \varphi_{\scriptscriptstyle{R(j)}}. \]

Our discretized systems associated with the finite element basis functions
$\{P_R\varphi_j:j=1,2,\ldots,N_0,~R\in G\}$ are determined by
setting $a_{\scriptscriptstyle{i,R(j)}}$ and $b_{\scriptscriptstyle{i,R(j)}}$
in (\ref{eq:abelAB}) and (\ref{eq:nabelAB}) as
\begin{equation}
    \label{eq:ab_FE}
    a_{\scriptscriptstyle{i,R(j)}} = a(P_R\varphi_j,\varphi_i),\quad
    b_{\scriptscriptstyle{i,R(j)}} = (P_R\varphi_j,\varphi_i).
\end{equation}
Now we turn to study the discretized systems from the approach that constructs
symmetry-adapted bases, and obtain the relation between the two approaches.

In the case of $d_{\nu}=1$, we apply projection operator $\mathscr{P}^{(\nu)}$
on all the finite element basis functions to construct the symmetry-adapted
bases. We see from Theorem \ref{thm:basis_real} that for each
$j\in\{1,2,\ldots,N_0\}$,
$\{ \mathscr{P}^{(\nu)}P_R\varphi_j: R\in G\}$
give one symmetry-adapted basis function.
According to Remark \ref{rmk:basis_real},
we can choose $R=E$ to get all the $N_0$ symmetry-adapted bases as follows
\[ \Phi_j = \mathscr{P}^{(\nu)}\varphi_j,~j=1,2,\ldots,N_0. \]
The discretized system under these bases then becomes
\[ \sum_{j=1}^{N_0}a(\Phi_j,\Phi_i)c_j = \tilde{\lambda}
\sum_{j=1}^{N_0}(\Phi_j,\Phi_i)c_j,\quad i=1,2,\ldots,N_0, \]
where $\{c_j\}$ are the unknowns. Equivalently,
\[ \widetilde{A}\tilde{\bf u} = \tilde{\lambda}\widetilde{B}\tilde{\bf u}, \]
where $\tilde{\bf u} = (c_1,c_2,\ldots,c_{N_0})^{\mathsf T}$ and
\begin{equation}
    \label{eq:sa_abelAB}
    \begin{array}{rcll}
        \widetilde{A} &=& (\widetilde{A}_{ij})_{N_0\times N_0},
        &~\widetilde{A}_{ij} = \frac{1}{g} {\displaystyle\sum_{R\in G}}
        {\Gamma^{(\nu)}(R)}^*~a(P_R\varphi_j,\varphi_i),
        \vspace{0.1cm}\\
        \widetilde{B} &=& (\widetilde{B}_{ij})_{N_0\times N_0},
        &~\widetilde{B}_{ij} =\frac{1}{g} {\displaystyle\sum_{R\in G}}
        {\Gamma^{(\nu)}(R)}^* (P_R\varphi_j,\varphi_i).
    \end{array}
\end{equation}
Comparing (\ref{eq:sa_abelAB}) with (\ref{eq:abelAB}) and using
(\ref{eq:ab_FE}), we obtain
\[ \widetilde{A} = \frac{1}{g}A, ~\widetilde{B} = \frac{1}{g}B. \]
Thus, in the case of $d_{\nu}=1$, there holds
\[ \lambda = \tilde{\lambda},~{\bf u} = \tilde{\bf u}. \]

In the case of $d_{\nu}=2$, there are two subproblems in (\ref{eq:subproblem}).
We choose $k=1$ and apply projection operator $\mathscr{P}^{(\nu)}_{11}$ on
all the finite element basis functions to construct symmetry-adapted bases
for the first subproblem.
Theorem \ref{thm:basis_real} tells that for each
$j\in\{1,2,\ldots,N_0\}$,
$\{ \mathscr{P}^{(\nu)}_{11} P_R\varphi_j:R\in G \}$
give $d_{\nu}=2$ symmetry-adapted bases.
According to Remark \ref{rmk:basis_real},
we choose identity operation $E$ and another $S\in G$ which satisfy that
the first columns of matrices
$\{ \Gamma^{(\nu)}(E),~\Gamma^{(\nu)}(S^{-1}) \}$
are linearly independent. Then
\[ \{ \mathscr{P}^{(\nu)}_{11}\varphi_j,
\mathscr{P}^{(\nu)}_{11}P_S\varphi_j:j=1,2,\ldots,N_0 \} \]
give all the $2N_0$ bases adapted to the $\nu$-$1$ symmetry
as follows
\[ (\Phi_1, \ldots, \Phi_{N_0}, \Psi_1,\ldots,\Psi_{N_0}) =
(\mathscr{P}^{(\nu)}_{11}\varphi_1,\ldots,\mathscr{P}^{(\nu)}_{11}\varphi_{N_0},
\mathscr{P}^{(\nu)}_{11}P_S\varphi_1,\ldots,
\mathscr{P}^{(\nu)}_{11}P_S\varphi_{N_0}). \]
The discretized system under these bases is
\[
\left\{
\begin{array}{c}
    {\displaystyle\sum_{j=1}^{N_0}}~a(c_{1j}\Phi_j+c_{2j}\Psi_j, \Phi_i) =
    \tilde{\lambda}~
    {\displaystyle\sum_{j=1}^{N_0}}~(c_{1j}\Phi_j+c_{2j}\Psi_j, \Phi_i),
    \quad i=1,2,\ldots,N_0,
    \vspace{0.1cm}\\
    {\displaystyle\sum_{j=1}^{N_0}}~a(c_{1j}\Phi_j+c_{2j}\Psi_j, \Psi_i) =
    \tilde{\lambda}~
    {\displaystyle\sum_{j=1}^{N_0}}~(c_{1j}\Phi_j+c_{2j}\Psi_j, \Psi_i),
    \quad i=1,2,\ldots,N_0,
\end{array}\right.
\]
where $\{c_{1j}, c_{2j}\}$ represent the unknowns. Equivalently,
\[ \widetilde{A}\tilde{\bf v} = \tilde{\lambda}\widetilde{B}\tilde{\bf v}, \]
where $\tilde{\bf v} =
(c_{11},c_{12},\ldots,c_{1N_0},~c_{21},c_{22},\ldots,c_{2N_0})^{\mathsf T}$
and
\begin{displaymath}
    \widetilde{A} = \left[ \begin{array}{cc}
        \widetilde{A}_{[11]} & \widetilde{A}_{[12]} \vspace{0.1cm}\\
        \widetilde{A}_{[21]} & \widetilde{A}_{[22]}
    \end{array} \right], ~
    \widetilde{B} = \left[ \begin{array}{cc}
        \widetilde{B}_{[11]} & \widetilde{B}_{[12]} \vspace{0.1cm}\\
        \widetilde{B}_{[21]} & \widetilde{B}_{[22]}
    \end{array} \right].
\end{displaymath}
A simple calculation shows
\begin{eqnarray*}
    \widetilde{A}_{[11]} &=& \frac{2}{g} A_{[11]}, \\
    \widetilde{A}_{[12]} &=& \frac{2}{g} \left(
    \Gamma^{(\nu)}(S)_{11} A_{[11]} +
    \Gamma^{(\nu)}(S)_{12} A_{[12]} \right), \\
    \widetilde{A}_{[21]} &=& \frac{2}{g} \left(
    {\Gamma^{(\nu)}(S)}^*_{11} A_{[11]} +
    {\Gamma^{(\nu)}(S)}^*_{12} A_{[21]} \right), \\
    \widetilde{A}_{[22]} &=& \frac{2}{g} \Big\{
    {\Gamma^{(\nu)}(S)}_{11} \left(
    {\Gamma^{(\nu)}(S)}^*_{11}A_{[11]} +{\Gamma^{(\nu)}(S)}^*_{12}A_{[21]}
    \right) \\
    &&+~{\Gamma^{(\nu)}(S)}_{12} \left(
    {\Gamma^{(\nu)}(S)}^*_{11}A_{[12]} +{\Gamma^{(\nu)}(S)}^*_{12}A_{[22]}
    \right) \Big\}.
\end{eqnarray*}
Let
\begin{displaymath}
    Q_l =
    \left[\begin{array}{cc}
        I_{N_0\times N_0} & {\bf 0}_{N_0\times N_0} \vspace{0.2cm}\\
        {\Gamma^{(\nu)}(S)}^*_{11}I_{N_0\times N_0} &
        {\Gamma^{(\nu)}(S)}^*_{12}I_{N_0\times N_0}
    \end{array} \right], ~
    Q_r =
    \left[\begin{array}{cc}
        I_{N_0\times N_0} &
        \Gamma^{(\nu)}(S)_{11}I_{N_0\times N_0} \vspace{0.2cm}\\
        {\bf 0}_{N_0\times N_0} &
        \Gamma^{(\nu)}(S)_{12}I_{N_0\times N_0}
    \end{array} \right],
\end{displaymath}
we have
\[ Q_l A Q_r = \frac{g}{2} \widetilde{A}. \]
Similarly
\[ Q_l B Q_r = \frac{g}{2} \widetilde{B}. \]
Thus, in the case of $d_{\nu}=2$, we get
\[ \lambda = \tilde{\lambda},\quad{\bf v} = Q_r\tilde{\bf v}, \]
i.e.,
\[ u_{1j} = c_{1j} + {\Gamma^{(\nu)}(S)}_{11}c_{2j}, ~
u_{2j} = {\Gamma^{(\nu)}(S)}_{12}c_{2j},\quad j=1,2,\ldots,N_0. \]

Consider a given $\nu\in\{1,2,\ldots,n_c\}$, the approach that
constructs symmetry-adapted bases seems to have an obvious advantage that
the $d_{\nu}$ subproblems are decoupled.
Theorem \ref{thm:basis_real} tells that the number of symmetry-adapted
bases for each subproblem is in fact $d_{\nu}N_0$. Therefore,
the coupled eigenvalue problem appeared in our decomposition approach
is not an induced complexity, but some reflection of the intrinsic property
of symmetry-based decomposition.

Solving subproblems instead of the original eigenvalue problem shall
reduce the computational overhead and memory requirement to a large extent.
The eigenvalues to be computed are distributed among subproblems, i.e.,
a smaller number of eigenpairs are required for each subproblem.
And the decomposed problems can be solved in a small subdomain.
Moreover, as indicated in Section~\ref{sec:formulation},
there is a possibility to improve the spectral separation,
which would accelerate convergence of iterative diagonalization.
In the next section, we shall propose a way to analyze the practical
decrease in the computational cost.

\section{Complexity and performance analysis}\label{sec:cost}
The advantage of solving subproblems (\ref{eq:subproblem1})
instead of the original problem (\ref{eq:ori-problem}) is
the reduction in computational overhead.
Based on a complexity analysis, we quantize this reduction and
present a way to analyze the practical speedup in CPU time.

\subsection{Complexity analysis}
Computational complexity is the dominant part of computational overhead
when the size of problem becomes sufficiently large.
So the fundamental step of complexity analysis is to figure out
the computational cost in floating point operations (flops).

In our computation, the algebraic eigenvalue problem will be solved by
the implicitly restarted Lanczos method (IRLM) implemented in ARPACK package
\cite{lehoucq98}.
Our complexity analysis will be based on IRLM,
whereas it can be extended to other iterative diagonalization methods.

Total flops of an iterative method are the product of
the number of iteration steps and the number of flops per iteration.
We shall analyze the number of flops per iteration, for which purpose
we represent the procedure of IRLM as Algorithm \ref{alg:IRLM} as follows.

\begin{algorithm}
    \KwIn{Maximum number of iteration steps;
    The $m$-step Lanczos Factorization $AV_m=V_mH_m+f_me_m^T$.}

    \BlankLine
    \Repeat{Convergence {\bf or} the number of iteration steps exceeded
    the maximum one} {
        Compute the Schur decomposition of symmetric tridiagonal matrix
        $H_m$ and select the set of $l$ shifts $\mu_1,\mu_2,\dots,\mu_l$\;

        $q^T\leftarrow e^T_m$\;

        \For{$j=1,2,\dots,l$}{

        $H_m-\mu_jI = Q_jR_j$, $H_m\leftarrow R_jQ_j + \mu_jI$\;

        $V_m\leftarrow V_mQ_j$, $q^H\leftarrow q^HQ_j$\;

        }

        $f_k\leftarrow v_{k+1}\hat{\beta}_k+f_m\sigma_k,
        V_k\leftarrow V_m(1:n,1:k),
        H_k\leftarrow H_m(1:k,1:k)$\;

        Beginning with the $k$-step Lanczos factorization
        $AV_k=V_kH_k+f_ke_k^T$,
        apply $l$ additional steps of the Lanczos process to obtain
        a new $m$-step Lanczos factorization $AV_m=V_mH_m+f_me_m^T$\;
     }
    \caption{An implicitly restarted Lanczos method \label{alg:IRLM}}
\end{algorithm}

\begin{table}[htpb]
    \caption{Notation in Algorithm \ref{alg:IRLM}.}
    \begin{center} \footnotesize
    \begin{tabular}{|c|c|}\hline
        Notation & Description \\\hline
        \multirow{2}{*}{$m$} & the maximum dimension of the Krylov subspace,\\
        & twice the number of required eigenpairs plus 5
        in our computation\\\hline
        $l$ & the number of Lanczos factorization steps, s.t. $m=k+l$ \\\hline
        \multirow{2}{*}{$A$} & the (sparse) matrix size of $n\times n$, \\
        & arising from the grid-based discretization
        of (\ref{eq:ori-problem}) or (\ref{eq:subproblem1}) \\\hline
        \multirow{2}{*}{$V_m$} & the matrix size of $n\times m$, \\
        & made of $m$ column vectors as the basis of the Krylov subspace
        \\\hline
        $H_m$ & the symmetric tridiagonal matrix size of $m\times m$ \\\hline
        \multirow{2}{*}{$f_m$} & the column vector size of $n$, \\
        & the residual vector after $m$ steps of Lanczos factorization \\\hline
        $e_m$ & the unit column vector size of $m$,
        in which the $m$-th component is one\\\hline
        $R_j$ & the upper triangular matrix size of $m\times m$ \\\hline
        $Q_j$ & the unitary matrix size of $m\times m$ \\\hline
        $v_{k+1}$ & the $(k+1)$-th column vector of $V_m$ \\\hline
        $\hat{\beta}_k$ & $H_m(k+1,k)$ \\\hline
        $\sigma_k$ & the $k$-th component of vector $q$ \\\hline
    \end{tabular}
    \end{center}
    \label{tab:IRLM}
\end{table}

Table \ref{tab:IRLM} is a supplementary remark to Algorithm \ref{alg:IRLM}.
In Algorithm \ref{alg:IRLM}, Step 2 is the Schur decomposition of $H_m$,
and consumes about $6m^2$ flops \cite{golub96}.
Steps 4 to 7 do $l$-step QR iteration with shifts.
Note that each $Q_j$ is the product of $(m-1)$ Givens transformations,
we have that Step 5 costs $8m(m-1)$ flops since applying one Givens
transformation to a matrix only changes two rows or columns of the matrix.
And for the same reason, Step 6 costs $4(m-1)(n+1)$ flops.
Consequently Steps 4 to 7 consume $4l(m-1)(2m+n+1)$ flops.
Regardless of BLAS-1 operations, we do $l$ matrix-vector multiplication
operations at Step 9.

Besides order $n$ of the matrix, the flops of one matrix-vector multiplication
also depend on the order of finite difference or finite elements.
If the shift-invert mode in ARPACK is employed to solve the generalized
eigenvalue problem arising from the finite element discretization,
the matrix-vector multiplication will be realized by some iterative linear
solver.
So we cannot figure out accurately the flops per matrix-vector multiplication
but represent it as $\mathcal{O}(n)$.

In total, the computational overhead per IRLM iteration can be estimated as
\[ 6m^2+l\left(4(m-1)(2m+n+1)+\mathcal{O}(n)\right) \]
flops. In general, order $n$ of the matrix is much more than $m$
for grid-based discretizations.
So the majority of flops per IRLM iteration is
\begin{equation}
    f(l,m,n) = l\left(4mn+\mathcal{O}(n)\right).
    \label{eq:flops_iteration}
\end{equation}

In order to make clear the reduction in flops per iteration
from solving subproblems instead of the original eigenvalue problem,
we divide the flops per iteration into two parts.
One is required by $l$-step $QR$ iteration, and the other is spent on
$l$ operations of matrix-vector multiplication.
We denote them by $f_1$ and $f_2$ respectively and rewrite
(\ref{eq:flops_iteration}) as follows
\begin{equation}
    \label{eq:flops_iteration_parts}
    f(l,m,n) = f_1(l,m,n) + f_2(l,m,n),
\end{equation}
where $f_1(l,m,n)=4lmn$ and $f_2(l,m,n)=\mathcal{O}(ln)$.

In solving the original eigenvalue problem (\ref{eq:ori-problem}),
the major flops per IRLM iteration can be accounted as
(\ref{eq:flops_iteration}) or (\ref{eq:flops_iteration_parts}) with $n=N$.
In the decomposition approach, as discussed in Section~\ref{sec:discretized_sys},
we shall solve $n_c$ decoupled eigenvalue problems,
and the size of discretized system for the $\nu$-th problem is $d_{\nu}N_0$.
In solving the $\nu$-th problem (\ref{eq:subproblem1}),
$m$ is reduced to $m/\theta_1$, $N$ to $d_{\nu}N/g$, and $l$ to $l/\theta_2$,
where $g$ is the order of finite group $G$, $\theta_1>1$ and
$\theta_2\approx \theta_1$ because $l$ is almost proportional to $m$
in Algorithm \ref{alg:IRLM}.
We shall explain in Section~\ref{sec:distrib_eigen} that the number of
required eigenpairs for each subproblem is set as the same in the computation,
so all the subproblems have an identical $\theta_1$.
Thus, the majority of total flops per iteration for all $n_c$
decomposed eigenvalue problems is
\begin{eqnarray}
    \sum_{\nu=1}^{n_c}
    f(\frac{l}{\theta_2},\frac{m}{\theta_1},\frac{d_{\nu}N}{g}) &=&
    \sum_{\nu=1}^{n_c}
    \left(f_1(\frac{l}{\theta_2},\frac{m}{\theta_1},\frac{d_{\nu}N}{g}) +
    f_2(\frac{l}{\theta_2},\frac{m}{\theta_1},\frac{d_{\nu}N}{g})\right)
    \nonumber\\
    &=& \frac{n_{sub}}{g}\left(\frac{1}{\theta_1\theta_2}f_1(l,m,N) +
    \frac{1}{\theta_2}f_2(l,m,N)\right),
    \label{eq:reduced_flops}
\end{eqnarray}
where $n_{sub} = \sum_{\nu=1}^{n_c}d_{\nu}$ is the number of subproblems.

As mentioned in Section~\ref{sec:discretization}, the decomposition approach
saves the computational cost of solving the eigenvalue problem.
Now the reduction can be characterized by (\ref{eq:reduced_flops}).

\subsection{Performance analysis}
In (\ref{eq:reduced_flops}), the order of factors for $f_1$ and $f_2$ differs,
so the practical speedup in CPU time cannot be properly estimated from
(\ref{eq:reduced_flops}).
We introduce the CPU time ratio $\omega$ of the matrix-vector multiplications
to the whole IRLM process in solving the original eigenvalue problem
(\ref{eq:ori-problem}). It is an a posteriori parameter which
screens affects of implementation, the runtime environment, as well as
the specific linear solver for the shift-invert mode.
Besides, testing for $\omega$ is feasible as the operation of
matrix-vector multiplication is usually provided by users.

Applying the symmetry-based decomposition approach instead of
solving (\ref{eq:ori-problem}) directly, we can show the speedup in
CPU time of one IRLM iteration as follows:
\[ s(\theta_1,\theta_2,\omega) = \frac{\frac{1}{\omega} }
{\frac{n_{sub}}{g} \left(\frac{1}{\theta_1\theta_2}\frac{1-\omega}{\omega}
+\frac{1}{\theta_2}\right)} = \frac{g\theta_1\theta_2}
{n_{sub}\left(1+(\theta_1-1)\omega\right)}. \]
That is
\begin{equation}
    \label{eq:speedup_rough}
    s(\theta_1,\theta_2,\omega) \approx
    \frac{g\theta_1^2}{n_{sub}\left(1+(\theta_1-1)\omega\right)}.
\end{equation}

In practice, $\theta_2$ is actually determined by the internal configurations
of algebraic eigenvalue solvers. So we prefer to use (\ref{eq:speedup_rough})
to predict the CPU time speedup before solving subproblems
(\ref{eq:subproblem1}).

In Section~\ref{sec:numer}, the validation of (\ref{eq:speedup_rough})
will be well supported by our numerical experiments.
Moreover, this performance analysis implies that the speedup
will be amplified when more eigenpairs are required and
a consequent decrease in $\omega$ is very likely.
Therefore, the symmetry-based decomposition will be attractive for
large-scale eigenvalue problems.

\section{Practical issues}\label{sec:impl}
In this section, we address some key issues in the implementation of
the symmetry-based decomposition approach under grid-based discretizations.

\subsection{Implementation of symmetry characteristics}
Symmetry characteristics play a critical role in the decomposition approach,
so it is important to preserve and realize symmetry characteristics
for discretized eigenfunctions.

For all the degrees of freedom not lying on symmetry elements,
the implementation of symmetry characteristics is straightforward
with grid-based discretizations. If $x\in\Omega$ is a degree of freedom
lying on the symmetry element corresponding to operation $R\in G$,
the symmetry characteristic
\[ u_l^{(\nu)}(Rx) = \sum_{m=1}^{d_{\nu}}
{\Gamma^{(\nu)}(R)}^*_{lm} u_{m}^{(\nu)}(x) \]
reduces to
\[ u_l^{(\nu)}(x) = \sum_{m=1}^{d_{\nu}}
{\Gamma^{(\nu)}(R)}^*_{lm} u_{m}^{(\nu)}(x). \]

If $\mathrm{det}\left(\Gamma^{(\nu)}(R) - I_{d_{\nu}\times d_{\nu}}\right)
\neq 0$, then all values $u_1^{(\nu)}(x),\dots,u_{d_{\nu}}^{(\nu)}(x)$
are zeros.
Otherwise, we have to find the independent ones out of
$u_1^{(\nu)}(x),\dots,u_{d_{\nu}}^{(\nu)}(x)$ and treat them as
additional degrees of freedom.

In our computation, we discretize the problem on a tensor-product grid
associated with the symmetry group.
Currently, for simplicity, we use symmetry groups with symmetry elements
on the coordinate planes, and prevent degrees of freedom from lying on
the symmetry elements,
by imposing an odd number of partition in each direction and
using finite elements of odd orders.

\subsection{Distribution of required eigenpairs among subproblems}
\label{sec:distrib_eigen}
The required eigenpairs of the original eigenvalue problem
(\ref{eq:ori-problem}) are distributed among associated subproblems,
and the number of eigenpairs required by each subproblem can be almost
reduced by as many times as the number of subproblems.
However, we are not able to see in advance the symmetry properties of
eigenfunctions corresponding to required eigenvalues.
Thus we have to consider some redundant eigenvalues for each subproblem.

We suppose to solve the first $N_e$ smallest eigenvalues of
the original problem.
First we set the number of eigenvalues to be computed for each subproblem as
$\frac{N_e}{n_{sub}}$ plus redundant $n_a$ eigenvalues,
where $n_{sub}=\sum_{\nu=1}^{n_c}d_{\nu}$ is the number of subproblems.
After solving the subproblems, we gather eigenvalues from all subproblems
and sort them in the ascending order.
After taking $N_e$ smallest eigenvalues, we check which subproblems
the remaining eigenvalues belong to.
If there is no eigenvalue left for some subproblem,
the number of computed eigenvalues for this subproblem is probably not enough.
Subsequently we restart computing the subproblem with an increased number of
required eigenpairs.

\subsection{Two-level parallel implementation}\label{sec:parallel_impl}
We have addressed in Section~\ref{sec:discretization} that
the $n_c$ decomposed problems are independent to each other and
can be solved simultaneously.
Accordingly we have a two-level parallel implementation illustrated by
Figure~\ref{fig:two_level}.
At the first level, we dispatch the $n_c$ decomposed problems among
groups of processors. At the second level, we distribute the grids among
each group of processors.
Since eigenfunctions of different subproblems are naturally orthogonal,
there is no communication between different groups of processors
during solving the eigenvalue problem.
Such two-level or multi-level parallelism is likely appreciable for
the architecture hierarchy of modern supercomputers.
We shall see in Section~\ref{sec:comm_cost} that the two-level parallel
implementation does reduce the communication cost.

\begin{figure}[htpb]
    \begin{center}
        \includegraphics[width=0.9\textwidth]{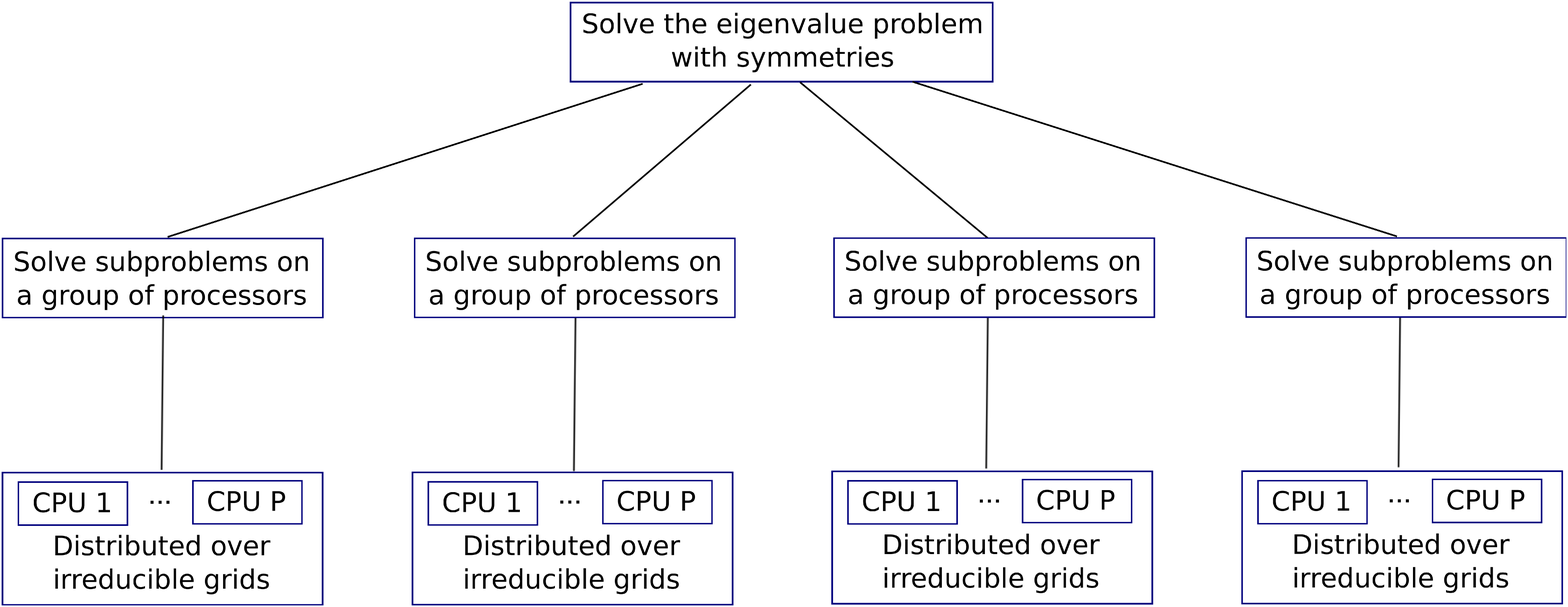}
    \end{center}
    \caption{Schematic illustration of two-level parallel implementation
    for solving the eigenvalue problem with symmetries.
    Actually, the number of processors in each group can be in
    proportion to $d_{\nu}N_0$, which is size of the $\nu$-th discretized
    system.}
    \label{fig:two_level}
\end{figure}

\section{Numerical tests and applications}\label{sec:numer}
In this section, we present some numerical examples arising from
quantum mechanics to validate the implementation and
illustrate the efficiency of the decomposition approach.
We use hexahedral finite element discretizations and
consider the crystallographic point groups of which symmetry operations
keep the hexahedral grids invariant.
We solve the matrix eigenvalue problem using subroutines of ARPACK.
Our computing platform is the LSSC-III cluster provided by
State Key Laboratory of Scientific and Engineering Computing (LSEC),
Chinese Academy of Sciences.

\subsection{Validation of implementation}
First we validate the implementation of the decomposition approach.
Consider the harmonic oscillator equation which is
a basic quantum eigenvalue problem as follows
\begin{equation}
    \label{eq:osc}
    -\frac{1}{2}\Delta u + \frac{1}{2} |x|^2 u = \lambda u
    \quad\mbox{in}~\mathbb{R}^3.
\end{equation}
The exact eigenvalues are given as
\[\lambda_{k,m,n} = k+m+n+1.5,~~k,m,n = 0,1,2,\ldots.\]
The computation can be done in a finite domain with zero boundary condition
since the eigenfunctions decay exponentially.
We set $\Omega = (-5.0,5.0)^3$ in our calculations and
solve the first 10 eigenvalues.

Obviously, the system has all the cubic symmetries.
As representatives, we test Abelian subgroup $D_{2h}$ and
non-Abelian subgroups $D_4$ and $D_{2d}$.
Table \ref{table:IR} gives the irreducible representation matrices of
these groups \cite{cornwell97}, where
\begin{eqnarray*}
S_1=\left[\begin{array}{cc} 1 & 0 \\ 0 & 1 \end{array}\right],\quad
S_2=\left[\begin{array}{cc}-1 & 0 \\ 0 &-1 \end{array}\right],\quad
S_3=\left[\begin{array}{cc} 0 &-1 \\ 1 & 0 \end{array}\right],\quad
S_4=\left[\begin{array}{cc} 0 & 1 \\-1 & 0 \end{array}\right],\\
S_5=\left[\begin{array}{cc} 1 & 0 \\ 0 &-1 \end{array}\right],\quad
S_6=\left[\begin{array}{cc}-1 & 0 \\ 0 & 1 \end{array}\right],\quad
S_7=\left[\begin{array}{cc} 0 & 1 \\ 1 & 0 \end{array}\right],\quad
S_8=\left[\begin{array}{cc} 0 &-1 \\-1 & 0 \end{array}\right].
\end{eqnarray*}
The hexahedral grids can be kept invariant under the three groups.

\begin{table}[htbp]
    \caption{Representation matrices of Abelian group $D_{2h}$ and
    non-Abelian groups $D_4$ and $D_{2d}$.
    All the three groups have 8 symmetry operations, i.e., order $g=8$.
    Abelian group $D_{2h}$ has $n_c=8$ one-dimensional irreducible
    representations, and both the two non-Abelian groups have $n_c=5$
    irreducible representations, one of which is two-dimensional.
    A description about the notation of symmetry operations in the table
    is given in Appendix A.}
    \begin{center} \footnotesize
    \begin{tabular}{|c|cccccccc|}
        \hline
        $D_{2h}$ & $E$ & $C_{2x}$ & $C_{2e}$ & $C_{2f}$
        & $I$ & $IC_{2x}$ & $IC_{2e}$ & $IC_{2f}$ \\
        \hline
        $\Gamma^{(1)}$ & 1 & 1 & 1 & 1 & 1 & 1 & 1 & 1 \\
        $\Gamma^{(2)}$ & 1 & 1 &-1 &-1 & 1 & 1 &-1 &-1 \\
        $\Gamma^{(3)}$ & 1 &-1 & 1 &-1 & 1 &-1 & 1 &-1 \\
        $\Gamma^{(4)}$ & 1 &-1 &-1 & 1 & 1 &-1 &-1 & 1 \\
        $\Gamma^{(5)}$ & 1 & 1 & 1 & 1 &-1 &-1 &-1 &-1 \\
        $\Gamma^{(6)}$ & 1 & 1 &-1 &-1 &-1 &-1 & 1 & 1 \\
        $\Gamma^{(7)}$ & 1 &-1 & 1 &-1 &-1 & 1 &-1 & 1 \\
        $\Gamma^{(8)}$ & 1 &-1 &-1 & 1 &-1 & 1 & 1 &-1 \\
        \hline \hline
        $D_4$ & $E$ & $C_{2y}$ & $C_{4y}$ & $C^{-1}_{4y}$
        & $C_{2x}$ & $C_{2z}$ & $C_{2c}$ & $C_{2d}$ \\
        \hline
        $\Gamma^{(1)}$ & 1 & 1 & 1 & 1 & 1 & 1 & 1 & 1 \\
        $\Gamma^{(2)}$ & 1 & 1 &-1 &-1 & 1 & 1 &-1 &-1 \\
        $\Gamma^{(3)}$ & 1 & 1 & 1 & 1 &-1 &-1 &-1 &-1 \\
        $\Gamma^{(4)}$ & 1 & 1 &-1 &-1 &-1 &-1 & 1 & 1 \\
        $\Gamma^{(5)}$ & $S_1$ & $S_2$ & $S_3$ & $S_4$
        & $S_5$ & $S_6$ & $S_7$ & $S_8$ \\
        \hline \hline
        $D_{2d}$ & $E$ & $C_{2y}$ & $IC_{4y}$ & $IC^{-1}_{4y}$
        & $IC_{2x}$ & $IC_{2z}$ & $C_{2c}$ & $C_{2d}$ \\
        \hline
        $\Gamma^{(1)}$ & 1 & 1 & 1 & 1 & 1 & 1 & 1 & 1 \\
        $\Gamma^{(2)}$ & 1 & 1 &-1 &-1 & 1 & 1 &-1 &-1 \\
        $\Gamma^{(3)}$ & 1 & 1 & 1 & 1 &-1 &-1 &-1 &-1 \\
        $\Gamma^{(4)}$ & 1 & 1 &-1 &-1 &-1 &-1 & 1 & 1 \\
        $\Gamma^{(5)}$ & $S_1$ & $S_2$ & $-S_3$ & $-S_4$
        & $-S_5$ & $-S_6$ & $S_7$ & $S_8$ \\
        \hline
    \end{tabular}
    \end{center}
    \label{table:IR}
\end{table}

According to Theorem \ref{thm:subproblem}, we can decompose
the original eigenvalue problem (\ref{eq:osc}) as follows:
\begin{enumerate}
    \item Applying $D_{2h}$, we have 8 completely decoupled subproblems.
    \item Applying $D_4$ or $D_{2d}$, we have 6 subproblems and
        two of them corresponding to representation $\Gamma^{(5)}$ are coupled
        eigenvalue problems.
\end{enumerate}
Figure \ref{fig:omega0} illustrates the irreducible subdomain $\Omega_0$
in which subproblems are solved.
The volume of $\Omega_0$ is one eighth of $\Omega$ for all the three groups.

\begin{figure}[htpb]
    \begin{center}
        \includegraphics[width=0.8\textwidth]{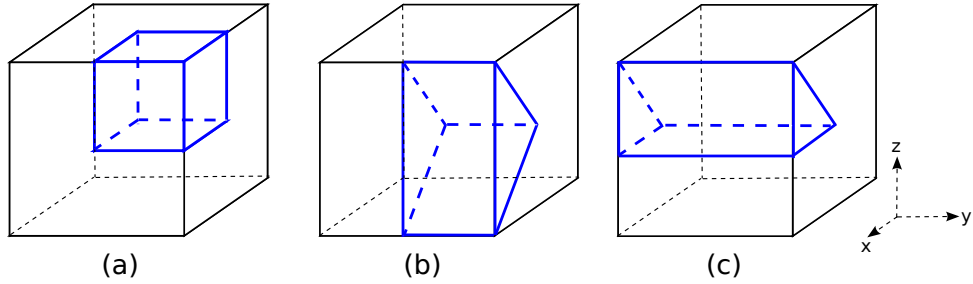}
    \end{center}
    \caption{Illustration of irreducible subdomain $\Omega_0$.
    (a) For $D_{2h}$ it is a small cube;
    (b) For $D_4$, a triangular prism;
    (c) For $D_{2d}$ also a triangular prism, with a different shape.}
    \label{fig:omega0}
\end{figure}

We employ trilinear finite elements to solve these eigenvalue subproblems,
and see from the convergence rate of eigenvalues that the implementation
is correct.
Taking non-Abelian group $D_4$ for instance,
we exhibit errors in eigenvalue approximations obtained from
solving the subproblems in Figure~\ref{fig:conv}.
And the $h^2$-convergence rate can be observed.

Moreover, in Table \ref{table:osc}, we list the $\nu$-$l$ symmetries of
computed eigenfunctions from solving the subproblems.
We observe that the required 10 eigenpairs are distributed over
subproblems, i.e., each subproblem only needs to solve a smaller number
of eigenpairs.

\begin{figure}[htpb]
    \begin{center}
        \includegraphics[width=0.6\textwidth]{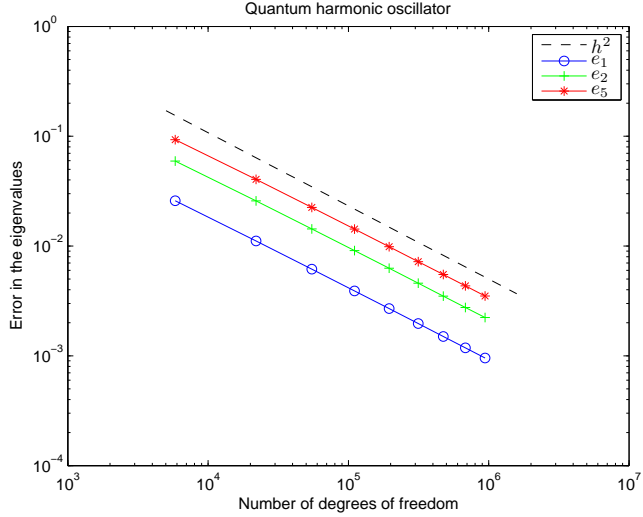}
    \end{center}
    \caption{Errors in the eigenvalue approximations from solving
    6 subproblems associated with non-Abelian group $D_4$ using
    trilinear finite elements.
    Errors in the first three different eigenvalues 1.5, 2.5 and 3.5
    are labeled as $e_1$, $e_2$ and $e_5$, respectively.
    The $h^2$-convergence rate can be observed.}
    \label{fig:conv}
\end{figure}

\begin{table}[htbp]
    \caption{The $\nu$-$l$ symmetries of the first 10 computed eigenfunctions
    from solving subproblems.
    The $\nu$-$l$ values indicate which subproblem each eigenfunction
    belongs to, where $\nu\in\{1,2,\ldots,n_c\}$ and
    $l\in\{1,2,\ldots,d_{\nu}\}$.
    In the case of $D_{2h}$, all the $l$ values are
    1 because it is an Abelian group and all irreducible representations are
    one-dimensional, i.e., $d_{\nu}=1$ for all $\nu=1,2,\ldots,8$.}
    \begin{center} \footnotesize
    \begin{tabular}{|c|c|cccccccccc|}
        \hline
        & & $u_1$&$u_2$&$u_3$&$u_4$&$u_5$&$u_6$&$u_7$&$u_8$&$u_9$&$u_{10}$ \\
        \hline
        \multirow{2}{*}{$D_{2h}$}
        & $\nu$ & 1 & 8 & 6 & 7 & 3 & 4 & 2 & 1 & 1 & 1 \\
        & $l$   & 1 & 1 & 1 & 1 & 1 & 1 & 1 & 1 & 1 & 1 \\ \hline
        \multirow{2}{*}{$D_4$}
        & $\nu$ & 1 & 3 & 5 & 5 & 5 & 5 & 4 & 1 & 2 & 1 \\
        & $l$   & 1 & 1 & 1 & 2 & 1 & 2 & 1 & 1 & 1 & 1 \\ \hline
        \multirow{2}{*}{$D_{2d}$}
        & $\nu$ & 1 & 5 & 5 & 2 & 4 & 5 & 5 & 1 & 2 & 1 \\
        & $l$   & 1 & 1 & 2 & 1 & 1 & 1 & 2 & 1 & 1 & 1 \\ \hline
    \end{tabular}
    \end{center}
    \label{table:osc}
\end{table}

\subsection{Reduction in computational cost}
Taking Abelian group $D_{2h}$ as an example, we compare the computational cost of solving the original eigenvalue problem
(\ref{eq:osc}) with that of solving 8 subproblems.
We compute the first 110 eigenvalues of the original eigenvalue problem.
And it is sufficient to solve the first 22 eigenvalues of each subproblem.
In order to illustrate and analyze the saving in computational cost,
we launch the tests on a single CPU core.

In Table \ref{table:cost-fe1},
we present statistics from trilinear finite element discretizations.
We see that the average CPU time of a single iteration
during solving the original problem (\ref{eq:osc}) is 42.29 seconds
while that of solving 8 subproblems is 3.61 seconds \footnote{
We count the average CPU time of a single iteration for each subproblem and
then accumulate them. Taking Table \ref{table:cost-fe1} for example,
we have that
$3.61=\frac{8.21}{18}+\frac{9.95}{22}+\frac{9.83}{22}+\frac{9.46}{21}
+\frac{8.23}{18}+\frac{9.44}{21}+\frac{9.03}{20}+\frac{9.34}{21}$}.
In Table \ref{table:cost-fe3},
we present statistics from tricubic finite element discretizations.
We observe that the average CPU time of a single iteration
during solving the original problem (\ref{eq:osc}) is 60.80 seconds
while that of solving 8 subproblems is 9.71 seconds.

\begin{table}[htbp]
    \caption{Statistics of solving the original problem (\ref{eq:osc}) and
    8 subproblems using trilinear finite elements.
    In Column 1, subproblems are labeled by different $\nu$ values.
    In Columns 2 and 3, we list the number of iteration steps and
    matrix-vector multiplications.
    In Columns 4 and 5, we present CPU time spent on
    matrix-vector multiplications and the whole procedure of IRLM.}
    \begin{center} \footnotesize
    \begin{tabular}{|c|cc|cc|}
        \hline
        Problem &\#Iter.&\#OP*x& time\_mv (sec.) & time\_total (sec.) \\
        \hline
        (\ref{eq:osc}) & 22 & 1599  & 175.01  & 930.39  \\ \hline
        $\nu=1$  & 18    & 356  &   5.13  &   8.21  \\
        $\nu=2$  & 22    & 420  &   6.16  &   9.95  \\
        $\nu=3$  & 22    & 421  &   6.06  &   9.83  \\
        $\nu=4$  & 21    & 406  &   5.84  &   9.46  \\
        $\nu=5$  & 18    & 353  &   5.06  &   8.23  \\
        $\nu=6$  & 21    & 405  &   5.74  &   9.44  \\
        $\nu=7$  & 20    & 389  &   5.51  &   9.03  \\
        $\nu=8$  & 21    & 403  &   5.75  &   9.34  \\
        \hline
    \end{tabular}
    \end{center}
    \label{table:cost-fe1}
\end{table}

\begin{table}[htbp]
    \caption{Statistics of solving (\ref{eq:osc}) and 8 subproblems
    using tricubic finite elements.}
    \begin{center} \footnotesize
    \begin{tabular}{|c|cc|cc|}
        \hline
        Problem &\#Iter.&\#OP*x& time\_mv (sec.) & time\_total (sec.) \\
        \hline
        (\ref{eq:osc}) & 57 & 3972 & 1696.29 & 3465.57 \\ \hline
        $\nu=1$  & 50    & 937  &   55.15  &   62.75  \\
        $\nu=2$  & 64    &1156  &   67.75  &   77.11  \\
        $\nu=3$  & 64    &1153  &   67.57  &   77.09  \\
        $\nu=4$  & 62    &1128  &   66.05  &   75.25  \\
        $\nu=5$  & 47    & 892  &   52.18  &   59.31  \\
        $\nu=6$  & 69    &1215  &   71.15  &   81.37  \\
        $\nu=7$  & 63    &1134  &   66.74  &   76.12  \\
        $\nu=8$  & 70    &1230  &   72.46  &   82.87  \\
        \hline
    \end{tabular}
    \end{center}
    \label{table:cost-fe3}
\end{table}

We note that the speedup in average CPU time of a single iteration is 11.71
with trilinear finite elements while it is decreased to 6.26
with tricubic finite elements.
This numerical phenomenon can be explained by performance analysis
(\ref{eq:speedup_rough}). In our computation, the maximum dimension
of Krylov subspace is twice the number of required eigenpairs plus 5,
which is recommended by ARPACK's tutorial examples.
So we have $\theta_1=4.59$.
We obtain from the statistics of solving the original problem that
the CPU time percentage $\omega$ of matrix-vector multiplications is 0.19 with
trilinear finite elements and grows to 0.49 with tricubic finite elements.
Correspondingly, using (\ref{eq:speedup_rough}), we predict that
the CPU time speedup for trilinear and tricubic finite elements would be
12.52 and 7.64, respectively.

We see from (\ref{eq:flops_iteration_parts}) that the computational cost of
$QR$-iteration grows faster than that of matrix-vector multiplication
when the number of required eigenpairs increases.
Thus we can expect that the decomposition approach would be more appreciable
for large-scale eigenvalue problems.

\subsection{Saving in communication} \label{sec:comm_cost}
Besides the reduction in computational cost, solving decoupled problems
will also save communication among parallel processors.
As mentioned in Section~\ref{sec:parallel_impl}, our implementation of
the decomposition approach is parallelized in two levels.
No communication occurs between any two groups of processors
during solving the eigenvalue problem.
This leads to a saving in communication.

For illustration,
we take the oscillator eigenvalue problem (\ref{eq:osc}) as an example.
We decompose it into 8 decoupled subproblems according to group $D_{2h}$.
The comparison of communication between solving the original problem and
the subproblems is given in Table \ref{table:comm}.

\begin{table}[htbp]
    \caption{Comparison of communication between solving (\ref{eq:osc}) and
    8 subproblems.
    Column 1 gives the number of processors.
    In the other columns, ``use symm'' represents solving subproblems
    and ``not use'' means solving the original eigenvalue problem.
    Columns 2 and 3 give the average number of processors
    each processor communicates with.
    Columns 4 and 5 list the average number of Bytes sent by each processor.
    And the last two columns report the CPU time spent on communication
    during matrix-vector multiplications.}
    \begin{center} \footnotesize
    \begin{tabular}{|c|cc|cc|cc|}
        \hline
        \multirow{2}{*}{$N_p$} & \multicolumn{2}{c|}{$N_p$ in comm}
        & \multicolumn{2}{c|}{Bytes in comm}
        & \multicolumn{2}{c|}{CPU time in comm (sec.)} \\
        & use symm&not use  & use symm&not use & use symm&not use  \\\hline
        8   &  0.00 & 1.75 &  0 & 134,560      &  0.00 & 8.93 \\
        16  &  1.00 & 1.88 & 19,608 & 145,451 &  0.20 & 10.36  \\
        \hline
    \end{tabular}
    \end{center}
    \label{table:comm}
\end{table}

\subsection{Applications to Kohn--Sham equations}
Now we apply the decomposition approach to electronic structure calculations
of symmetric molecules, based on code RealSPACES
(Real Space Parallel Adaptive Calculation of Electronic Structure)
of the LSEC of Chinese Academy of Sciences.
In the context of density functional theory (DFT), ground state properties of
molecular systems are usually obtained by solving the Kohn--Sham equation
\cite{hohenberg-kohn64,kohn-sham65,martin04}.
It is a nonlinear eigenvalue problem as follows
\begin{equation}
    \label{eq:kohn-sham}
    \left(-\frac{1}{2}\Delta + V^{\mbox{eff}}[\rho]\right)\Psi_n
    = \epsilon_n\Psi_n \quad\mbox{in}~\mathbb{R}^3,
\end{equation}
where $\rho({\bf r})=\sum_{n=1}^{N_e}f_n\left|\Psi_n({\bf r})\right|^2$
is the charge density contributed by $N_e$ eigenfunctions $\{\Psi_n\}$
with occupancy numbers $\{f_n\}$, and $V^{\mbox{eff}}[\rho]$ the
so-called effective potential which is a nonlinear functional of $\rho$.
On the assumption of no external fields,
$V^{\mbox{eff}}[\rho]$ can be written into
\[ V^{\mbox{eff}} = V^{\mbox{ne}} + V^{\mbox{\small H}} + V^{\mbox{xc}}, \]
where $V^{\mbox{ne}}$ is the Coulomb potential between the nuclei and
the electrons, $V^{\mbox{\small H}}$ the Hartree potential, and
$V^{\mbox{xc}}$ the exchange-correlation potential \cite{martin04}.
The ground state density of a confined system decays exponentially
\cite{agmon81,garding83,simon00}, so we choose the computational domain as
an appropriate cube and impose zero boundary condition.

As a nonlinear eigenvalue problem, Kohn--Sham equation (\ref{eq:kohn-sham})
is solved by the self-consistent field (SCF) iteration \cite{martin04}.
The dominant part of computation is the repeated solving of the
linearized Kohn--Sham equation with a fixed effective potential.
The number of required eigenstates grows in proportion to
the number of valence electrons in the system.
Therefore the Kohn--Sham equation solver will probably
make the performance bottleneck for large-scale DFT calculations.

Real-space discretization methods are attractive for confined systems
since they allow a natural imposition of the zero boundary condition
\cite{beck00,dai11,kronik06}. Among real-space mesh techniques,
the finite element method keeps both locality and the variational property,
and has been successfully applied to electronic structure calculations
(see, e.g.,
\cite{ackermann94,fang12,fattebert07,gong08,pask99,pask05,sterne99,suryanarayana10,tsuchida95,white89,zhang08});
others like the finite difference method, finite volume method and
the wavelet approach have also shown the potential in this field
\cite{chelikowsky94,dai11,genovese08,hasegawa11,iwata10,kronik06,ono05}.

We solve the Kohn--Sham equation of some symmetric molecules
with tricubic finite element discretizations.
The statistics are summarized in Table \ref{table:esc}.
The full symmetry group of these molecules is the tetrahedral group $T_d$.
For simplicity we select subgroup $D_2$ as shown in Table
\ref{table:IR-d2} \cite{cornwell97}.
Accordingly, the Kohn--Sham equation can be decomposed into 4
decoupled subproblems.
It is indicated by the increasing speedup in Table \ref{table:esc} that
the decomposition approach is appreciable for large-scale symmetric
molecular systems.

\begin{table}[htbp]
    \caption{Representation matrices of Abelian group $D_2$.
    It has 4 symmetry operations, i.e., order $g=4$, and thus
    has $n_c=4$ one-dimensional irreducible representations.
    We refer to Appendix A for a description of
    symmetry operations.}
    \begin{center} \footnotesize
    \begin{tabular}{|c|cccc|}
        \hline
        $D_2$ & $E$ & $C_{2x}$ & $C_{2y}$ & $C_{2z}$ \\
        \hline
        $\Gamma^{(1)}$ & 1 & 1 & 1 & 1 \\
        $\Gamma^{(2)}$ & 1 & 1 &-1 &-1 \\
        $\Gamma^{(3)}$ & 1 &-1 & 1 &-1 \\
        $\Gamma^{(4)}$ & 1 &-1 &-1 & 1 \\
        \hline
    \end{tabular}
    \end{center}
    \label{table:IR-d2}
\end{table}

\begin{table}[htbp]
    \caption{Comparison between solving the original Kohn--Sham equation and
    subproblems.
    Column 3 gives the number of required eigenstates.
    The number of degrees of freedom given in Columns 4 and 5 is required by
    the convergence of ground state energy \cite{fang12}.
    Columns 7 and 8 list the average CPU time in diagonalization
    at each SCF iteration step, which is the dominant part of time.
    The last column is the speedup of the decomposition approach.}
    \begin{center} \footnotesize
    \begin{tabular}{|c|c|c|cc|c|cc|c|}
        \hline
        \multirow{2}{*}{System} & \multirow{2}{*}{$G$} &
        \multirow{2}{*}{$N_e$} & \multirow{2}{*}{$N$}&\multirow{2}{*}{$N_0$}&
        \multirow{2}{*}{$N_p$} & \multicolumn{2}{c|}{CPU time in diag. (sec.)}
        & \multirow{2}{*}{Speedup}
        \\
        & & &  & & & not use&use symm & \\
        \hline
        $\mbox{C}_{123}\mbox{H}_{100}$ & $D_2$
        & 300 &  1,191,016 & 297,754 & 32 & 2,783 & 558 & 4.99 \\
        \hline
        $\mbox{C}_{275}\mbox{H}_{172}$ & $D_2$
        & 640  & 1,643,032 & 410,758 & 32 & 13,851 & 1,559 & 8.88 \\
        \hline
        $\mbox{C}_{525}\mbox{H}_{276}$ & $D_2$
        & 1200 & 2,097,152 & 524,288 & 64 & 25,296 & 2,334 & 10.84 \\
        \hline
    \end{tabular}
    \end{center}
    \label{table:esc}
\end{table}

\section{Concluding remarks}
In this paper, we have proposed a decomposition approach to
eigenvalue problems with spatial symmetries.
We have formulated a set of eigenvalue subproblems friendly for
grid-based discretizations.
Different from the classical treatment of symmetries in quantum chemistry,
our approach does not explicitly construct symmetry-adapted bases.
However, we have provided a construction procedure for
the symmetry-adapted bases, from which we have obtained the relation between
the two approaches.

Note that such a decomposition approach can reduce the computational cost
remarkably since only a smaller number of eigenpairs are solved for
each subproblem and the subproblems can be solved in a smaller subdomain.
We would believe that the quantization of this reduction implies that
our approach could be appreciable for large-scale eigenvalue problems.
In practice, we solve a sufficient number of redundant eigenpairs
for each subproblem in order not to miss any eigenpairs.
It would be very helpful for reducing the extra work if one could
predict the distribution of eigenpairs among subproblems.

Under finite element discretizations, our decomposition approach has been
applied to Kohn--Sham equations of symmetric molecules.
If solving Kohn--Sham equations of periodic crystals,
we should consider plane wave expansion which could be regarded as
grid-based discretization in reciprocal space.
In Appendix C, we show that the invariance
under some coordinate transformation can be kept by Fourier transformation.
So the decomposition approach would be applicable to plane waves, too.

Currently, we have imposed an odd number of partition and used finite elements
of odd orders to avoid degrees of freedom on symmetry elements.
In numerical examples, we have treated only a part of cubic symmetries
for validation and illustration.
Obviously, the decomposition approach and its practical issues
can be adapted to other spatial symmetries with appropriate grids.

In this paper, we concentrate on spatial symmetries only.
It is possible to use other symmetries to reduce the computational cost, too.
For instance, the angular momentum, spin and parity symmetries of atoms
have been exploited during solving the Schr{\"o}dinger equation in
\cite{friesecke09,mendl10};
the total particle number and the total spin $z$-component,
except for rotational and translational symmetries,
have been taken into account to block-diagonalize
the local (impurity) Hamiltonian in the computation of
dynamical mean-field theory for strongly correlated systems
\cite{gull11,haule07}.
It is our future work to exploit these underlying or internal symmetries.

\section*{Appendix A: Basic concept of group theory}
\renewcommand{\theequation}{A.\arabic{equation}}
\renewcommand{\thesection}{A}
\setcounter{equation}{0} 
In this appendix, we include some basic concepts of group theory
for a more self-contained exposition.
They could be found in standard textbooks like
\cite{bishop93,cornwell97,cotton90,jones60,tinkham64,wigner59}.

A group $G$ is a set of elements $\{R\}$ with a well-defined multiplication
operation which satisfy several requirements:
\begin{enumerate}
    \item The set is closed under the multiplication.
    \item The associative law holds.
    \item There exists a unit element $E$ such that $ER=RE=R$ for any $R\in G$.
    \item There is an inverse $R^{-1}$ in $G$ to each element $R$
        such that $RR^{-1} = R^{-1}R = E$.
\end{enumerate}

If the commutative law of multiplication also holds,
$G$ is called an Abelian group.
Group $G$ is called a finite group if it contains a finite number of elements.
And this number, denoted by $g$, is said to be the order of the group.
The rearrangement theorem tells that the elements of $G$ are only rearranged
by multiplying each by any $R\in G$, i.e., $RG=G$ for any $R\in G$.

An element $R_1\in G$ is called to be conjugate to $R_2$ if $R_2=SR_1S^{-1}$,
where $S$ is some element in the group.
All the mutually conjugate elements form a class of elements.
It can be proved that group $G$ can be divided into distinct classes.
Denote the number of classes as $n_c$.
In an Abelian group, any two elements are commutative, so each element
forms a class by itself, and $n_c$ equals the order of the group.

Two groups is called to be homomorphic if there exists a correspondence
between the elements of the two groups as $R\leftrightarrow R'_1,R'_2,\ldots$,
which means that if $RS=T$ then the product of any $R'_i$ with any $S'_j$
will be a member of the set $\{T'_1,T'_2,\ldots\}$.
In general, a homomorphism is a many-to-one correspondence. It specializes to
an isomorphism if the correspondence is one-to-one.

A representation of a group is any group of mathematical entities
which is homomorphic to the original group.
We restrict the discussion to matrix representations.
Any matrices representation with nonvanishing determinants is equivalent to
a representation by unitary matrices.
Two representations are said to be equivalent if they are associated by
a similarity transformation.
If a representation can not be equivalent to representations of lower
dimensionality, it is called irreducible.

The number of all the inequivalent, irreducible, unitary representations
is equal to $n_c$, which is the number of classes in $G$.
The Celebrated Theorem tells that
\[ \sum_{\nu=1}^{n_c}d_{\nu}^2=g, \]
where $d_{\nu}$ denotes the dimensionality of the $\nu$-th representation.
Since the number of classes of an Abelian group equals the number of elements,
an Abelian group of order $g$ has $g$ one-dimensional irreducible
representations.

The groups used in this paper are all crystallographic point groups.
Groups $D_2$, $D_{2h}$, $D_{2d}$ and $D_4$
are four dihedral groups; the first two groups are Abelian and the other two
are non-Abelian.
In Table \ref{table:IR} and Table \ref{table:IR-d2}, $C_{nj}$ denotes
a rotation about axis $Oj$ by $2\pi/n$ in the right-hand screw sense and
$I$ is the inversion operation \cite{cornwell97}.
The $Oj$ axes are illustrated in Figure~\ref{fig:axes}.
We refer to textbooks like \cite{bishop93,cornwell97,cotton90,tinkham64}
for more details about crystallographic point groups.
\begin{figure}[htpb]
    \begin{center}
        \includegraphics[width=0.35\textwidth]{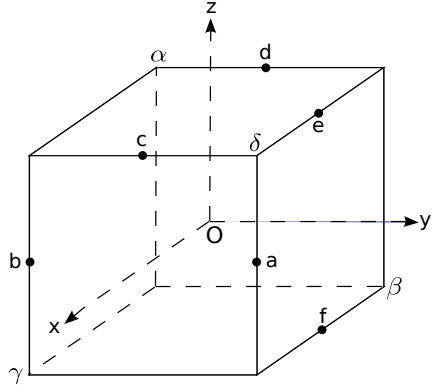}
    \end{center}
    \caption{Rotation axes in Table \ref{table:IR} and Table \ref{table:IR-d2}}
    \label{fig:axes}
\end{figure}

\section*{Appendix B: Proof of Proposition \ref{prop:Pml_property}}
\renewcommand{\theequation}{B.\arabic{equation}}
\renewcommand{\thesection}{B}
\setcounter{equation}{0} 
\begin{proof}
    (a) Since $\{P_R\}$ are unitary operators, we have
    \[ {\mathscr{P}^{(\nu)}_{ml}}^* = \left(\frac{d_{\nu}}{g} \sum_{R\in G}
    {\Gamma^{(\nu)}(R)}^*_{ml} P_R \right)^* = \frac{d_{\nu}}{g} \sum_{R\in G}
    \Gamma^{(\nu)}(R)_{ml} P_{R^{-1}} = \frac{d_{\nu}}{g} \sum_{S\in G}
    \Gamma^{(\nu)}(S^{-1})_{ml} P_S, \]
    which together with the fact that $\Gamma^{(\nu)}$ is a unitary
    representation derives
    \[ {\mathscr{P}^{(\nu)}_{ml}}^* = \frac{d_{\nu}}{g} \sum_{S\in G}
    {\Gamma^{(\nu)}(S)}^*_{lm} P_S  = \mathscr{P}^{(\nu)}_{lm}. \]

    (b) It follows from the definition that
    \begin{eqnarray*}
        \mathscr{P}^{(\nu)}_{ml} \mathscr{P}^{(\nu')}_{m'l'}
        &=& \left(\frac{d_{\nu}}{g}\sum_{R\in G}
        {\Gamma^{(\nu)}(R)}^*_{ml}P_R\right)
        \left(\frac{d_{\nu'}}{g} \sum_{S\in G}
        {\Gamma^{(\nu')}(S)}^*_{m'l'}P_S\right)
        \\
        &=& \frac{d_{\nu}d_{\nu'}}{g^2} \sum_{R\in G}{\Gamma^{(\nu)}(R)}^*_{ml}
        \left( \sum_{S\in G} {\Gamma^{(\nu')}(S)}^*_{m'l'}P_{RS} \right).
    \end{eqnarray*}
    Note that the rearrangement theorem implies that,
    when $S$ runs over all the group elements, $S'=RS$ for any $R$
    also runs over all the elements. Hence we get
    \begin{eqnarray*}
        \mathscr{P}^{(\nu)}_{ml} \mathscr{P}^{(\nu')}_{m'l'}
        &=& \frac{d_{\nu}d_{\nu'}}{g^2} \sum_{R\in G}{\Gamma^{(\nu)}(R)}^*_{ml}
        \left( \sum_{S'\in G} {\Gamma^{(\nu')}(R^{-1}S')}^*_{m'l'}P_{S'}
        \right)\\
        &=& \frac{d_{\nu}d_{\nu'}}{g^2} \sum_{S'\in G} \left(\sum_{R\in G}
        {\Gamma^{(\nu)}(R)}^*_{ml}~{\Gamma^{(\nu')}(R^{-1}S')}^*_{m'l'}
        \right) P_{S'}.
    \end{eqnarray*}
    We may calculate as follows
    \begin{eqnarray*}
        \sum_{R\in G}
        {\Gamma^{(\nu)}(R)}^*_{ml} {\Gamma^{(\nu')}(R^{-1}S')}^*_{m'l'}
        &=& \sum_{R\in G} {\Gamma^{(\nu)}(R)}^*_{ml}
        \left( \sum_{n=1}^{d_{\nu'}}
        {\Gamma^{(\nu')}(R^{-1})}^*_{m'n}~{\Gamma^{(\nu')}(S')}^*_{nl'}
        \right) \\
        &=& \sum_{R\in G} {\Gamma^{(\nu)}(R)}^*_{ml}
        \left( \sum_{n=1}^{d_{\nu'}}
        {\Gamma^{(\nu')}(R)}_{nm'}~{\Gamma^{(\nu')}(S')}^*_{nl'}
        \right) \\
        &=& \sum_{n=1}^{d_{\nu'}} {\Gamma^{(\nu')}(S')}^*_{nl'} \left(
        \sum_{R\in G} {\Gamma^{(\nu)}(R)}^*_{ml}~{\Gamma^{(\nu')}(R)}_{nm'}
        \right),
    \end{eqnarray*}
    which together with the great orthogonality theorem yields
    \[ \sum_{R\in G}
    {\Gamma^{(\nu)}(R)}^*_{ml} {\Gamma^{(\nu')}(R^{-1}S')}^*_{m'l'}
    = \delta_{\nu\nu'} \delta_{lm'}\frac{g}{d_{\nu'}}
    {\Gamma^{(\nu)}(S')}^*_{ml'}. \]
    Thus we arrive at
    \[ \mathscr{P}^{(\nu)}_{ml} \mathscr{P}^{(\nu')}_{m'l'} =
    \delta_{\nu\nu'} \delta_{lm'}
    \frac{d_{\nu}}{g} \sum_{S'\in G} {\Gamma^{(\nu)}(S')}^*_{ml'} P_{S'}
    = \delta_{\nu\nu'} \delta_{lm'} \mathscr{P}^{(\nu)}_{ml'}. \]
\qquad\end{proof}

\section*{Appendix C: Spatial symmetry in reciprocal space}
\renewcommand{\theequation}{C.\arabic{equation}}
\renewcommand{\thesection}{C}
\setcounter{equation}{0}
Plane wave method is widely used for solving the Kohn--Sham equations
of crystals. Actually, plane waves may be regarded as grid-based discretizations
in reciprocal space. We will show that the symmetry relation in real space
is kept in reciprocal space.
The solution domain $\Omega$ of crystals can be spanned by
three lattice vectors in real space. We denote them as ${\bf a}_i (i=1,2,3)$.
If function $f$ is invariant with integer multiple translations of
the lattice vectors, we then present the function in reciprocal space as like:
\[ \hat{f}({\bf q})=\frac{1}{N}\sum_{ \bf{r}}f({\bf r})
e^{-\imath {\bf q}\cdot{\bf r}}, \]
where ${\bf q}$ is any vector in reciprocal space satisfying
${\bf q}\cdot{\bf a}_i=2\pi\frac{n}{N_i}$ with $n$ an integer,
$N_i$ the number of degrees of freedom along direction ${\bf a}_i~(i=1,2,3)$,
and $N=N_1N_2N_3$ the total number of degrees of freedom.
Assume that $f$ is kept invariant under coordinate transformation $R$
in $\Omega$. We obtain from
\[ \hat{f}(R{\bf q}) = \frac{1}{N}\sum_{ {\bf r}}f({\bf r})
e^{-\imath (R{\bf q})\cdot{\bf r}} \]
and the coordinate transformation $R$ can be represented as
an orthogonal matrix that
\[ \hat{f}(R{\bf q}) = \frac{1}{N}\sum_{ {\bf r}}f( {\bf r})
e^{-\imath {\bf q}\cdot\left(R^{-1}{\bf r}\right)}. \]
Since
\[f(R^{-1}{\bf r})=f({\bf r})\quad\forall {\bf r}\in\Omega,\]
we have
\[ \hat{f}(R{\bf q}) = \frac{1}{N}\sum_{R^{-1}{\bf r}}f(R^{-1}{\bf r})
e^{-\imath {\bf q}\cdot\left(R^{-1}{\bf r}\right)} = \hat{f}({\bf q}). \]
Hence the decomposition approach is probably applicable to plane waves.

\vskip 0.2cm
{\sc Acknowledgements.} 
The authors would like to thank Prof. Xiaoying Dai, Prof. Xingao Gong,
Prof. Lihua Shen, Dr. Zhang Yang, and Mr. Jinwei Zhu for
their stimulating discussions on electronic structure calculations.
The second author is grateful to Prof. Zeyao Mo for his encouragement.


\begin{thebibliography}{10}

\bibitem{ackermann94}
J.~Ackermann, B.~Erdmann, and R.~Roitzsch.
\newblock A self-adaptive multilevel finite element method for the stationary
  {S}chr{\"o}dinger equation in three space dimensions.
\newblock {\em J. Chem. Phys.}, 101:7643--7650, 1994.

\bibitem{agmon81}
S.~Agmon.
\newblock {\em Lectures on the Exponential Decay of Solutions of Second-Order
  Elliptic Operators}.
\newblock Princeton University Press, Princeton, NJ, 1981.

\bibitem{babuska89}
I.~Babuska and J.~E. Osborn.
\newblock Finite element-{G}alerkin approximation of the eigenvalues and
  eigenvectors of self-adjoint problems.
\newblock {\em Math. Comput.}, 52(186):275--297, 1989.

\bibitem{babuska91}
I.~Babuska and J.~Osborn.
\newblock Eigenvalue problems.
\newblock In {\em Handbook of Numerical Analysis}, volume~II, pages 641--787.
  North-Holland, 1991.

\bibitem{banjai07}
L.~Banjai.
\newblock Eigenfrequencies of fractal drums.
\newblock {\em J. Comput. Appl. Math.}, 198:1--18, 2007.

\bibitem{beck00}
T.~L. Beck.
\newblock Real-space mesh techniques in density-functional theory.
\newblock {\em Rev. Mod. Phys.}, 72:1041--1080, 2000.

\bibitem{bennighof04}
J.~K. Bennighof and R.~B. Lehoucq.
\newblock An automated multilevel substructuring method for eigenspace
  computation in linear elastodynamics.
\newblock {\em SIAM J. Sci. Comput.}, 25:2084--2106, 2004.

\bibitem{bishop93}
D.~M. Bishop.
\newblock {\em Group Theory and Chemistry}.
\newblock Dover, New York, 1993.

\bibitem{bossavit86}
A.~Bossavit.
\newblock Symmetry, groups, and boundary value problems. {A} progressive
  introduction to noncommutative harmonic analysis of partial differential
  equations in domains with geometrical symmetry.
\newblock {\em Comp. Meth. Appl. Mech. Engng.}, 56:167--215, 1986.

\bibitem{bossavit93}
A.~Bossavit.
\newblock Boundary value problems with symmetry, and their approximation by
  finite elements.
\newblock {\em SIAM J. Appl. Math.}, 53:1352--80, 1993.

\bibitem{cances03}
E.~Cances, M.~Defranceschi, W.~Kutzelnigg, C.~Le Bris, and Y.~Mada.
\newblock Computational quantum chemistry: a primer.
\newblock In Ph.~G. Ciarlet and C.~Le Bris, editors, {\em Handbook of Numerical
  Analysis, Special volume, Computational Chemistry, Volume X}, pages 3--270.
  North-Holland, 2003.

\bibitem{chatelin83}
F.~Chatelin, {\em Spectral Approximations of
Linear Operators}, Academic Press, New York, 1983.

\bibitem{chelikowsky94}
J.~R. Chelikowsky, N.~Troullier, and Y.~Saad.
\newblock Finite-difference-pseudopotential method: {E}lectronic structure
  calculations without a basis.
\newblock {\em Phys. Rev. Lett.}, 72:1240--1243, 1994.

\bibitem{cornwell97}
J.~F. Cornwell.
\newblock {\em Group Theory in Physics: An Introduction}.
\newblock Academic Press, California, 1997.

\bibitem{cotton90}
F.~A. Cotton.
\newblock {\em Chemical Applications of Group Theory}.
\newblock John Wiley and Sons, New York, 3rd edition, 1990.

\bibitem{craig68}
R.~R. Craig, Jr. and M.~C.~C. Bampton.
\newblock Coupling of substructures for dynamic analysis.
\newblock {\em AIAA J.}, 6:1313--1319, 1968.

\bibitem{dai11}
X.~Dai, X.~Gong, Z.~Yang, D.~Zhang, and A.~Zhou.
\newblock Finite volume discretizations for eigenvalue problems with
  applications to electronic structure calculations.
\newblock {\em Multiscale Model. Simul.}, 9:208--240, 2011.

\bibitem{dai-yang-zhou08} X.~Dai, Z.~Yang, and A.~Zhou,
    Symmetric finite volume schemes for eigenvalue problems in
    arbitrary dimensions, {\it Sci. China Ser. A}, 51:1401--1414, 2008.

\bibitem{fang12}
J.~Fang, X.~Gao, and A.~Zhou.
\newblock A {K}ohn--{S}ham equation solver based on hexahedral finite elements.
\newblock {\em J. Comput. Phys.}, 231:3166--3180, 2012.

\bibitem{fattebert07}
J.-L. Fattebert, R.~D. Hornung, and A.~M. Wissink.
\newblock Finite element approach for density functional theory calculations on
  locally-refined meshes.
\newblock {\em J. Comput. Phys.}, 223:759--773, 2007.

\bibitem{friesecke09}
G.~Friesecke and B.~D. Goddard.
\newblock Asymptotics-based {CI} models for atoms: properties, exact solution
  of a minimal model for {L}i to {N}e, and application to atomic spectra.
\newblock {\em Multiscale Model. Simul.}, 7:1876--1897, 2009.

\bibitem{garding83}
L.~G{\aa}rding,
On the essential spectrum of Schr\"odinger operators,
{\it J. Funct. Anal.}, 52:1--10, 1983.

\bibitem{genovese08}
L.~Genovese, A.~Neelov, S.~Goedecker, T.~Deutsch, S.~A. Ghasemi, A.~Willand,
  D.~Caliste, O.~Zilberberg, M.~Rayson, A.~Bergman, and R.~Schneider.
\newblock Daubechies wavelets as a basis set for density functional
  pseudopotential calculations.
\newblock {\em J. Chem. Phys.}, 129:014109, 2008.

\bibitem{golub96}
G.~H. Golub and C.~F. van Loan.
\newblock {\em Matrix Compuations}.
\newblock Johns Hopkins University Press, 1996.

\bibitem{gull11}
E.~Gull, A.~J. Millis, A.~I. Lichtenstein, A.~N. Rubtsov,
M.~Troyer, and P.~Werner
\newblock Continuous-time {M}onte {C}arlo methods for quantum impurity models.
\newblock {\em Rev. Mod. Phys.}, 83:349--404, 2011.

\bibitem{gong08}
X.~Gong, L.~Shen, D. Zhang, and A.~Zhou.
Finite element approximations for Schr{\" o}dinger equations
with applications to electronic structure computations.
{\it J. Comput. Math.}, 23:310-327, 2008.

\bibitem{hackbusch92}
W.~Hackbusch.
\newblock {\em Elliptic Differential Equations: Theory and Numerical Treatment}.
\newblock Springer-Verlag, Berlin Heidelberg, 1992.

\bibitem{hasegawa11}
Y.~Hasegawa, J.-I. Iwata, M.~Tsuji, D.~Takahashi, A.~Oshiyama, K.~Minami,
  T.~Boku, F.~Shoji, A.~Uno, M.~Kurokawa, H.~Inoue, I.~Miyoshi, and
  M.~Yokokawa.
\newblock First-principles calculations of electron states of a silicon
  nanowire with 100,000 atoms on the {K} computer.
\newblock In {\em Proceedings of 2011 International Conference for High
  Performance Computing, Networking, Storage and Analysis (SC2011)}, pages
  1--11, 2011.

\bibitem{haule07}
K.~Haule.
\newblock Quantum {M}onte {C}arlo impurity solver for
cluster dynamical mean-field theory and electronic
structure calculations with adjustable cluster base.
\newblock {\em Phys. Rev. B}, 75:155113, 2007.

\bibitem{hohenberg-kohn64}
P.~Hohenberg and W.~Kohn.
\newblock Inhomogeneous electron gas.
\newblock {\em Phys. Rev. B}, 136(3B):B864--B871, 1964.

\bibitem{hurty60}
W.~C. Hurty.
\newblock Vibrations of structure systems by component-mode synthesis.
\newblock {\em ASCE J. Engng. Mech. Division}, 86:51--69, 1960.

\bibitem{iwata10}
J.-I. Iwata, D.~Takahashi, A.~Oshiyama, T.~Boku, K.~Shiraishi, S.~Okada, and
  K.~Yabana.
\newblock A massively-parallel electronic-structure calculations based on
  real-space density functional theory.
\newblock {\em J. Comput. Phys.}, 229:2339--2363, 2010.

\bibitem{jones60}
H.~Jones.
\newblock {\em The Theory of Brillouin Zones and Electronic States in
  Crystals}.
\newblock North-Holland, Amsterdam, 1960.

\bibitem{kohn-sham65}
W.~Kohn and L.~J. Sham.
\newblock Self-consistent equations including exchange and correlation effects.
\newblock {\em Phys. Rev.}, 140(4A):A1133--A1138, 1965.

\bibitem{kronik06}
L.~Kronik, A.~Makmal, M.~L. Tiago, M.~M.~G. Alemany, M.~Jain, X.~Huang,
  Y.~Saad, and J.~R. Chelikowsky.
\newblock Parsec -- the pseudopotential algorithm for real-space electronic
  structure calculations: recent advances and novel applications to
  nano-structures.
\newblock {\em Phys. Stat. Sol. (b)}, 243:1063--1079, 2006.

\bibitem{kuttler84}
J.~R. Kuttler and V.~G. Sigillito.
\newblock Eigenvalues of the {L}aplacian in two dimensions.
\newblock {\em SIAM Rev.}, 26:163--193, 1984.

\bibitem{lehoucq98}
R.~B. Lehoucq, D.~C. Sorensen, and C.~Yang.
\newblock {\em ARPACK Users' Guide: Solution of Large-scale Eigenvalue Problems
  with Implicitly Restarted Arnoldi Methods}.
\newblock SIAM, Philadelphia, 1998.

\bibitem{martin04}
R.~M. Martin.
\newblock {\em Electronic Structure: Basic Theory and Practical Methods}.
\newblock Cambridge University Press, Cambridge, 2004.

\bibitem{mendl10}
C.~B. Mendl and G.~Friesecke.
\newblock Efficient algorithm for asymptotics-based configuration-interaction
  methods and electronic structure of transition metal atoms.
\newblock {\em J. Chem. Phys.}, 133:184101, 2010.

\bibitem{neuberger06}
J.~M. Neuberger, N.~Sieben, and J.~W. Swift.
\newblock Computing eigenfunctions on the {K}och {S}nowflake: {A} new grid and
  symmetry.
\newblock {\em J. Comput. Appl. Math.}, 191:126--142, 2006.

\bibitem{ono05}
T.~Ono and K.~Hirose.
\newblock Real-space electronic-structure calculations with a time-saving
  double-grid technique.
\newblock {\em Phys. Rev. B}, 72:085115, 2005.

\bibitem{pask99}
J.~E. Pask, B.~M. Klein, C.~Y. Fong, and P.~A. Sterne.
\newblock Real-space local polynomial basis for solid-state
  electronic-structure calculations: {A} finite-element approach.
\newblock {\em Phys. Rev. B}, 59:12352--11358, 1999.

\bibitem{pask05}
J.~E. Pask and P.~A. Sterne.
\newblock Finite element methods in \textit{ab initio} electronic structure
  calculations.
\newblock {\em Model. Simul. Mater. Sci. Eng.}, 13:71--96, 2005.

\bibitem{simon00}
B.~Simon.
\newblock Schr{\"o}dinger operators in the twentieth century.
\newblock {\em J. Math. Phys.}, 41:3523--3555, 2000.

\bibitem{sterne99}
P.~A. Sterne, J.~E. Pask, and B.~M. Klein.
\newblock Calculation of positron observables using a finite element-based
  approach.
\newblock {\em Appl. Surf. Sci.}, 149:238--243, 1999.

\bibitem{suryanarayana10}
P.~Suryanarayana, V.~Gavini, and T.~Blesgen.
\newblock Non-periodic finite-element formulation of {K}ohn--{S}ham density
  functional theory.
\newblock {\em J. Mech. Phys. Solids}, 58:256--280, 2010.

\bibitem{tinkham64}
M.~Tinkham.
\newblock {\em Group Theory and Quantum Mechanics}.
\newblock McGraw-Hill, New York, 1964.

\bibitem{tors06} T.~Torsti, T.~Eirola, J.~Enkovaara, T.~Hakala, P.~Havu,
V.~Havu, T.~H\"{o}yn\"{a}l\"{a}nmaa, J.~Ignatius, M.~Lyly,
I.~Makkonen, T.~T.~Rantala, J.~Ruokolainen, K.~Ruotsalainen,
E.~R\"{a}s\"{a}nen, H.~Saarikoski, and M.~J.~Puska, {\em Three
real-space discretization techniques in electronic structure
calculations}, Phys. Stat. Sol., B243:1016-1053, 2006.

\bibitem{trefethen06}
L.~N. Trefethen and T.~Betcke.
\newblock Computed eigenmodes of planar regions.
\newblock In {\em Recent advances in differential equations and mathematical
  physics}, volume 412 of {\em Contemp. Math.}, pages 297--314, Providence, RI,
  2006. Amer. Math. Soc.

\bibitem{tsuchida95}
E.~Tsuchida and M.~Tsukada.
\newblock Electronic-structure calculations based on the finite-element method.
\newblock {\em Phys. Rev. B}, 52:5573--5578, 1995.

\bibitem{white89}
S.~R. White, J.~W. Wilkins, and M.~P. Teter.
\newblock Finite-element method for electronic structure.
\newblock {\em Phys. Rev. B}, 39:5819--5833, 1989.

\bibitem{wigner59}
E.~P. Wigner.
\newblock {\em Group Theory and its Application to the Quantum Mechanics of
  Atomic Spectra}.
\newblock Academic Press, New York, 1959.

\bibitem{young01}
D.~C. Young.
\newblock {\em Computational Chemistry: A Practical Guide for Applying
  Techniques to Real-World Problems}.
\newblock John Wiley and Sons, New York, 2001.

\bibitem{zhang08}
D.~Zhang, L.~Shen, A.~Zhou, and X.~Gong.
\newblock Finite element method for solving {K}ohn--{S}ham equations based on
  self-adaptive tetrahedral mesh.
\newblock {\em Phys. Lett. A}, 372:5071--5076, 2008.

\bibitem{zienkiewicz05}
O.~C. Zienkiewicz and R.~L. Taylor.
\newblock {\em The Finite Element Method for Solid and Structural Mechanics}.
\newblock Elsevier, London, 6th edition, 2005.

\end{thebibliography}

\end{document}